\newtheoremstyle{mystyle2}{}{}{}{2pt}{\scshape}{.}{ }{}
\newtheoremstyle{mystyle}{}{}{\slshape}{2pt}{\scshape}{.}{ }{}
\newtheoremstyle{etapestyle}{}{}{\itshape}{2em}{\sffamily}{:}{ }{\thmname{#1}}
\newtheoremstyle{definitionstyle}{}{}{}{2pt}{\bfseries}{.}{ }{}
\newtheorem{thm}{Th\'{e}or\`{e}me}[section]
\newtheorem{prop}[thm]{Proposition}
\newtheorem{lemme}[thm]{Lemme}
\theoremstyle{mystyle2}
\newtheorem{quest}[thm]{Question}
\theoremstyle{mystyle}
\theoremstyle{remark}
\theoremstyle{etapestyle}
\newtheorem{etape1}{Etape 1 }
\newtheorem{etape2}{Etape 2 }
\newtheorem{etape3}{Etape 3 }
\newtheorem{etape4}{Etape 4 }
\newtheorem{etape5}{Etape 5 }
\theoremstyle{definitionstyle}
\DeclareMathOperator{\codim}{codim} 
 \DeclareMathOperator{\Spec}{Spec}
\DeclareMathOperator{\Frac}{Frac} \DeclareMathOperator{\lon}{long}
\DeclareMathOperator{\prof}{prof}
\DeclareMathOperator{\coprof}{coprof}
\title{Le th\'{e}or\`{e}me de Bertini en famille}
\author{Olivier BENOIST}    
\date{}
\begin{document}

\maketitle

\begin{abstract} 

  On majore la dimension de l'ensemble des hypersurfaces de $\mathbb{P}^N$ dont l'intersection avec une variété projective intègre fixée n'est pas intègre. Les majorations obtenues sont optimales. Comme application, on construit, quand c'est possible, des hypersurfaces dont les intersections avec toutes les variétés d'une famille de variétés projectives intègres sont intègres. Le degré des hypersurfaces construites est explicite.

 \end{abstract}

\selectlanguage{english}

\begin{abstract} 

  We give upper bounds for the dimension of the set of hypersurfaces of $\mathbb{P}^N$ whose intersection with a fixed integral projective variety is not integral. Our upper bounds are optimal. As an application, we construct, when possible, hypersurfaces whose intersections with all the varieties of a family of integral projective varieties are integral. The degree of the hypersurfaces we construct is explicit.

 \end{abstract}

\selectlanguage{frenchb}

\vspace{1em}




On fixe $K$ un corps de caractéristique quelconque, qui sera
systématiquement sous-entendu. Par exemple,
$\mathbb{P}^N=\mathbb{P}^N_K$. Par sous-variété de $\mathbb{P}^N$, on entendra sous-schéma fermé géométriquement intègre de $\mathbb{P}^N$.
On note $\mathcal{H}_e=\mathcal{H}_{e,N}=\mathbb{P}(H^0(\mathbb{P}^N,\mathcal{O}(e)))$
l'espace des hypersurfaces de degré $e$ de $\mathbb{P}^N$.

Si $X\subset\mathbb{P}^N$ est une sous-variété, on notera $\mathcal{F}^{\rm{int}}_e(X)$ (resp. $\mathcal{F}^{\rm{igr}}_e(X)$) le sous-ensemble de $\mathcal{H}_e$ constitué des hypersurfaces dont l'intersection avec $X$ n'est pas géométriquement intègre de codimension $1$ dans $X$ (resp. n'est pas géométriquement irréductible et génériquement réduite de codimension $1$ dans $X$). Le théorème de Bertini (voir par exemple \cite{Jouanolou} I 6.10) montre que si $\dim(X)\geq 2$, $\mathcal{F}^{\rm{int}}_e(X)$ est strictement inclus dans $\mathcal{H}_e$. 

 Pour les questions que nous allons étudier, la propriété \og irréductible et génériquement réduit\fg \hspace{0.2mm} se comporte mieux  que l'intégrité. D'autre part, pour faire fonctionner des arguments de déformation, nous aurons besoin de savoir que ces mauvais lieux sont fermés. C'est pourquoi nous allons utiliser la variante suivante du théorème de Bertini, qui fait l'objet de la première partie de cet article. 

\begin{thm}\label{bertinigr}

Soit $X$ une sous-variété de $\mathbb{P}^N$ de dimension $\geq2$. Alors  $\mathcal{F}^{\rm{int}}_e(X)$ et $\mathcal{F}^{\rm{igr}}_e(X)$ sont des fermés stricts de $\mathcal{H}_e$.

 \end{thm}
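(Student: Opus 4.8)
The plan is to study all the sections $X\cap H$ simultaneously through the universal hypersurface, and to separate the exceptional locus where the intersection has the wrong dimension from the locus where the family is flat of the expected dimension. Strictness is the easy half: since $\dim X\ge 2$ and $\mathcal O(e)$ is very ample, the classical Bertini theorem (\cite{Jouanolou} I 6.10) produces at least one $H\in\mathcal H_e$ with $X\cap H$ geometrically integral of codimension $1$. Such an $H$ lies in neither $\mathcal F^{\rm{int}}_e(X)$ nor $\mathcal F^{\rm{igr}}_e(X)$, because geometric integrality implies geometric irreducibility, generic reducedness and codimension $1$; hence both sets are proper subsets of $\mathcal H_e$.

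For closedness, I would first isolate the linear subspace $L\subset\mathcal H_e$ of hypersurfaces containing $X$, namely the projectivisation of the kernel of the restriction $H^0(\mathbb P^N,\mathcal O(e))\to H^0(X,\mathcal O_X(e))$; it is closed. Since $X$ is integral, for $[H]\notin L$ the degree-$e$ form restricts to a nonzero section on $X$, so $X\cap H$ has pure codimension $1$, whereas for $[H]\in L$ one has $X\cap H=X$, of codimension $0$. Thus $L$ is contained in both bad sets, and the analysis reduces to the open set $U=\mathcal H_e\setminus L$. Over $U$ I would form the universal intersection $\mathcal Y_U\subset X\times U$, cut out by the universal degree-$e$ equation. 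On each fibre $X\times\{[H]\}$ (with $[H]\in U$) this equation restricts to a nonzero, hence nonzerodivisor, section of an integral scheme; therefore $\mathcal Y_U$ is a relative effective Cartier divisor in the flat family $X\times U\to U$, and the projection $\pi\colon\mathcal Y_U\to U$ is flat, proper and of finite presentation, with geometrically equidimensional fibres $X\cap H$ of dimension $\dim X-1$. This flatness is the key reduction that makes the semicontinuity theorems applicable.

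I would then invoke the constructibility and openness results for fibres of flat, proper, finitely presented morphisms [EGA IV, 9.7.7 and 12.2.4]: the locus of base points with geometrically integral fibre, and the locus with geometrically irreducible fibre, are open. This already shows that $\{[H]\in U:\ X\cap H\ \text{geometrically integral}\}$ is open, so its complement $\mathcal F^{\rm{int}}_e(X)\cap U$ is closed in $U$. For $\mathcal F^{\rm{igr}}_e(X)$ I would combine openness of geometric irreducibility with openness of the Serre condition $(R_0)$ on fibres from loc.\ cit. Concretely, restricting to the open locus of geometrically irreducible fibres, each fibre is $m\cdot Z$ for its unique reduced component $Z$; flatness keeps the fibrewise degree $m\deg Z$ constant, and generic reducedness is exactly $m=1$, whose failure $m\ge 2$ is closed by upper semicontinuity of the multiplicity of the fibre at its generic point. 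Either way the good locus for $\mathcal F^{\rm{igr}}_e$ is open in $U$, so $\mathcal F^{\rm{igr}}_e(X)\cap U$ is closed in $U$. Finally I would glue: writing $B$ for either bad set, we have $B\cap U$ closed in $U$ and $L=\mathcal H_e\setminus U\subset B$, whence $\overline B\subset L\cup\overline{B\cap U}$ with $\overline{B\cap U}\setminus U\subset L\subset B$ and $\overline{B\cap U}\cap U=B\cap U$; therefore $\overline B\subset B$ and $B$ is closed in $\mathcal H_e$.

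The main obstacle is the generic reducedness condition. Geometric integrality and geometric irreducibility are standard openness statements, but \emph{generically reduced} is strictly weaker than \emph{reduced} — they differ precisely when $X$ fails to be Cohen--Macaulay, so that $X\cap H$ may acquire embedded components — and this is exactly why working with $\mathcal F^{\rm{igr}}_e$ is advantageous. Establishing that generic reducedness, i.e.\ the condition $(R_0)$ on fibres or equivalently multiplicity one at the generic point, is an open condition is the delicate point; particular care is needed in positive characteristic, where reducedness cannot be reduced to the more familiar openness of the smooth locus.
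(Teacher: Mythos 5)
Your treatment of $\mathcal{F}^{\rm{int}}_e(X)$ is essentially correct and coincides with the paper's argument: away from the closed set $L$ of hypersurfaces containing $X$, the universal intersection is proper, flat and of finite presentation over the base (your relative Cartier divisor argument for flatness is a legitimate substitute for the paper's constant Hilbert polynomial argument), EGA IV 12.2.4 (viii) does give openness of the locus of geometrically integral fibres, and your gluing of $L$ back into the bad set is correct. The gap is in the $\mathcal{F}^{\rm{igr}}_e(X)$ half, and it is not a technicality: you assert that the locus of base points with geometrically \emph{irreducible} fibre is open for a proper, flat, finitely presented morphism, citing EGA IV 9.7.7 and 12.2.4. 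That is false. EGA IV 9.7.7 gives only local \emph{constructibility} of this locus, and geometric irreducibility is not (and cannot be) among the open conditions of 12.2.4, because it is not an open condition. A counterexample sits inside the very situation of the theorem: take $N=2$, $X=\mathbb{P}^2$, $e=2$, and the pencil of conics $H_t:\ x(x-ty)=0$. The fibre over $t=0$ is the double line $x^2=0$, which is irreducible (though not generically reduced), while every fibre with $t\neq 0$ is a pair of distinct lines, hence reducible; so the irreducible locus contains $[H_0]$ but no neighbourhood of it. Consequently the "open locus of geometrically irreducible fibres" to which you restrict does not exist, and your presentation of the good locus as (open irreducible locus) $\cap$ (multiplicity-one locus, open inside it) collapses at its first step.

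What is true --- and this is exactly the content of Proposition \ref{ouv} of the paper, where the real work of the proof lies --- is that the \emph{conjunction} "geometrically irreducible and generically reduced" is open, provided all irreducible components of the fibres have the same dimension. The two conditions cannot be separated: in the example above, the double line is expelled from the good locus only by the generic-reducedness condition, and it is only the combined locus ($=$ smooth conics) that is open. The paper proves this by showing the combined locus is locally constructible (EGA IV 9.7.7 and 9.8.7) and stable under generization, and then applying the criterion that a locally constructible set stable under generization is open (EGA III, 0.9.2.5); the stability under generization, checked over a discrete valuation ring, is where the serious input enters, namely the inequality of lengths at maximal points under specialization (EGA IV 2, 3.4.1.1), which forbids two distinct generic components from specializing onto a single generically reduced component. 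Your appeal to "upper semicontinuity of the multiplicity of the fibre at its generic point" is an unproved surrogate for precisely this length inequality --- and in any case you only invoke it after restricting to an open set that is not open. So the $\mathcal{F}^{\rm{igr}}_e(X)$ part of your proposal has a genuine gap, and filling it requires an argument of the type of Proposition \ref{ouv} rather than a citation of fibrewise openness theorems.
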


 Quand $K$ est infini, une conséquence de ce théorème est qu'il existe des hypersurfaces dans le complémentaire de $\mathcal{F}^{\rm{int}}_e(X)$ (resp. $\mathcal{F}^{\rm{igr}}_e(X)$). On cherche dans cet article à
obtenir une version \og en famille\fg \hspace{0.2mm} de ce théorème.
On se pose plus précisément la question suivante :

\begin{quest}\label{quest0}

\'{E}tant donnée une famille de sous-variétés de dimension
$\geq2$ de $\mathbb{P}^N$, peut-on trouver une hypersurface dont les
intersections avec toutes les variétés de cette famille soient géométriquement
intègres (resp. géométriquement irréductibles et génériquement réduites) ? Si la réponse est positive, peut-on trouver une telle
hypersurface de petit degré ?

\end{quest}

En fait, cela revient à contrôler, pour chaque variété
$X$ de la famille, la codimension de $\mathcal{F}^{\rm{int}}_e(X)$ (resp. $\mathcal{F}^{\rm{igr}}_e(X)$) dans $\mathcal{H}_e$.
Plus précisément, il faut répondre à la question suivante :

\begin{quest}\label{quest}

La codimension de $\mathcal{F}^{\rm{int}}_e(X)$ (resp. $\mathcal{F}^{\rm{igr}}_e(X)$) dans $\mathcal{H}_e$ tend-elle vers l'infini avec $e$ ?

\end{quest}

  Dans la deuxième partie de cet article, on obtient des minorations optimales de $\codim_{\mathcal{H}_e}(\mathcal{F}^{\rm{igr}}_e(X))$ en fonction de $e$ et de $\dim(X)$. En particulier, la réponse à la question \ref{quest} pour $\mathcal{F}^{\rm{igr}}_e(X)$ est positive. L'énoncé est le suivant :

\begin{thm}\label{codimigr}

Soit $X$ une sous-variété de $\mathbb{P}^N$ de dimension $n\geq2$. Alors :

$$\codim_{\mathcal{H}_e}(\mathcal{F}^{\rm{igr}}_e(X))\geq
\begin{cases}
\ n-1        &\mbox{si  \hspace{1em}} e=1
\\\ \binom{e+n-1}{e}-n &\mbox{si  \hspace{1em}} e\geq2.

\end{cases}$$

De plus, ces bornes sont optimales. Elles sont atteintes pour un cône sur une courbe quand $e=1$ et pour un espace linéaire quand $e\geq2$.

 \end{thm}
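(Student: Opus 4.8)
The plan is to establish optimality first and then the lower bound, which is the substantial part. For optimality, when $e\ge 2$ and $X=\mathbb{P}^n$ is a linear subspace the restriction map $H^0(\mathbb{P}^N,\mathcal{O}(e))\to H^0(\mathbb{P}^n,\mathcal{O}(e))$ is surjective, so $\mathcal{F}^{\mathrm{igr}}_e(X)$ is the preimage, under a linear projection, of the locus of degree-$e$ forms on $\mathbb{P}^n$ that are reducible or non-reduced over $\bar{K}$. Its largest component consists of the forms divisible by a linear form, i.e. the image of the multiplication map $\mathbb{P}(H^0(\mathcal{O}(1)))\times\mathbb{P}(H^0(\mathcal{O}(e-1)))\to\mathbb{P}(H^0(\mathcal{O}(e)))$ on $\mathbb{P}^n$; this image has dimension $n+\binom{e+n-1}{n}-1$, so the codimension is $\binom{e+n}{n}-\binom{e+n-1}{n}-n=\binom{e+n-1}{e}-n$, as claimed. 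When $e=1$ and $X$ is the cone over a curve $C$ of degree $\ge 2$ with vertex a linear $\mathbb{P}^{n-2}$, every hyperplane through the vertex meets $X$ in the cone over the finite set $H\cap C$, hence in a reducible divisor; these hyperplanes form a linear subspace of $\mathcal{H}_1$ of codimension $n-1$.

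For the lower bound I would assume $K=\bar{K}$, since codimension is unchanged by this extension and $\mathcal{F}^{\mathrm{igr}}_e(X)_{\bar K}\subseteq\mathcal{F}^{\mathrm{igr}}_e(X_{\bar K})$. I split $\mathcal{F}:=\mathcal{F}^{\mathrm{igr}}_e(X)$ into three loci: (i) $X\subseteq H$; (ii) $H\cap X$ reduced but reducible; (iii) $H\cap X$ non-reduced. For (i), the codimension of $\{H:X\subseteq H\}$ equals the number of conditions $X$ imposes on degree-$e$ forms, which by the minimal Hilbert function of an $n$-dimensional integral variety is at least $\binom{e+n}{n}$, far above the bound. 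For (ii) and (iii) I attach to each $H$ (with $X\not\subseteq H$) a prime divisor $Z\subseteq X$ of dimension $n-1$ occurring in $D:=H\cap X$, so that $Z\le D$ in case (ii) and $2Z\le D$ in case (iii); this places $\mathcal{F}$, outside (i), in the image of the incidence variety $\{(Z,H):Z\le D\}$ fibred over the Chow variety of such $Z$. Bounding $\dim\mathcal{F}$ by the dimension of this incidence and computing its fibre $\{H:Z\subseteq H\}=\mathbb{P}(H^0(\mathcal{I}_Z(e)))$, whose codimension in $\mathcal{H}_e$ is $c_Z(e):=\dim S_Z(e)$ (the Hilbert function of $Z$), reduces the whole bound, for $e\ge 2$, to the core inequality
\[
\dim\{\text{famille de }Z\}\ \le\ c_Z(e)-\binom{e+n-1}{e}+n ,
\]
valid for every prime divisor $Z$ of $X$, together with its $e=1$ analogue in which the right-hand side is $c_Z(1)-(n-1)$.

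The core inequality rests on two inputs. The first is the minimal Hilbert function bound $c_Z(e)=\dim S_Z(e)\ge\binom{e+n-1}{e}$ for an $(n-1)$-dimensional integral variety $Z$, with equality for all $e$ only when $Z$ is a linear $\mathbb{P}^{n-1}$; this is what forces the extremal $Z$ to be linear and produces the exact constant $n$. The second is an upper bound on the dimension of the family swept out by $Z$. In the extremal case $\deg Z=1$, where $Z$ runs over the $(n-1)$-planes contained in $X$, I would argue at a general smooth point $x\in X$: each such plane through $x$ lies in the projective tangent space $\mathbb{P}(T_xX)\cong\mathbb{P}^n$, and the hyperplanes of $\mathbb{P}^n$ through $x$ move in dimension $n-1$, so an incidence count gives that these planes move in dimension at most $n$, matching the linear and cone examples exactly. \textbf{The main obstacle} is to prove this family-dimension bound uniformly in $\delta=\deg Z$: one must show that as $\delta$ grows the number of conditions $c_Z(e)$ outpaces the dimension of the family of degree-$\delta$ divisors on $X$ by at least $\binom{e+n-1}{e}-n$. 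I expect to treat it by a general-position argument at a general point of $X$ generalising the linear case, fed by the Hilbert function lower bound for $Z$; case (iii) then costs strictly more, since $2Z\le D$ forces $H$ to be singular along $Z$, and the remaining verification that the resulting bound equals $\binom{e+n-1}{e}-n$ for $e\ge2$ and $n-1$ for $e=1$ is routine binomial bookkeeping.
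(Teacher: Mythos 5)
The optimality half of your proposal is correct and coincides with the paper's: for $e\geq2$ the locus of reducible/non-reduced forms on a linear $\mathbb{P}^n$ has codimension $\binom{e+n-1}{e}-n$ (the paper's Lemme \ref{caslineaire}), and for $e=1$ the cone over a curve of degree $\geq2$ gives codimension $n-1$. The reduction to $K$ algebraically closed and the estimate for the locus $\{H : X\subseteq H\}$ (codimension $\geq\binom{e+n}{e}$, the paper's Lemme \ref{codimhyper}) are also fine.

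The lower bound, however, has a genuine gap, and it sits exactly where you flag it: the ``core inequality'' bounding $\dim\mathcal{Z}$ for every irreducible family $\mathcal{Z}$ of prime divisors $Z\subseteq X$ by $c_Z(e)-\binom{e+n-1}{e}+n$ is not a technical verification --- it essentially \emph{is} the theorem, repackaged as a uniform bound on dimensions of families of divisors on an arbitrary (possibly very singular) integral $X$. Your sketch treats only $\delta=\deg Z=1$, via tangent spaces at a general smooth point; for $\delta$ up to $e\deg X$ no mechanism is proposed that makes $c_Z(e)$ outrun the dimension of the corresponding Chow-variety components by the required constant, and bounding dimensions of Chow varieties of divisors of arbitrary degree on a singular variety is not something a ``general-position argument at a general point'' delivers. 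There is also a local slip: $2Z\leq D$ does not force $H$ to be singular along $Z$; it forces $H$ to contain the first infinitesimal neighborhood of $Z$ in $X$ (tangency along $Z$), a weaker condition, so even the claim that case (iii) ``costs strictly more'' needs proof. Note that the paper is built precisely to avoid any such incidence/Chow count: it reduces by projection to $X$ a hypersurface (Proposition \ref{reduc1}), then degenerates $X$ to a cone over a plane curve (Proposition \ref{reduc2}) and finally to a union of $d$ hyperplanes through a common axis; since the bad locus in $\mathbb{A}^1\times\mathcal{H}_e$ is closed, upper semicontinuity of fiber dimension reduces the bound to the degenerate fiber, where it follows from the linear case together with a Ramanujam--Samuel argument showing that a component of the limit intersection meeting the axis away from the vertex must contain all $d$ pieces $L_k\cap H$. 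To complete your route you would have to prove the core inequality in full generality, which I expect to be at least as hard as the theorem; otherwise you should switch to a degeneration/semicontinuity argument of this kind.
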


  On en déduit dans la troisième partie des minorations analogues pour $\mathcal{F}^{\rm{int}}_e(X)$. L'énoncé qui suit montre que la réponse à la question \ref{quest} pour $\mathcal{F}^{\rm{int}}_e(X)$ est positive si et seulement si $X$ n'a pas de point fermé de profondeur $1$.

\begin{thm}\label{codimint}

Soit $X$ une sous-variété de $\mathbb{P}^N$ de dimension $n\geq2$. Alors :

\begin{enumerate}[(i)]
\item Si $X$ n'a pas de point de profondeur $1$ et de codimension $>1$,  $$\codim_{\mathcal{H}_e}(\mathcal{F}^{\rm{int}}_e(X))\geq
\begin{cases}
\ n-1        &\mbox{si  \hspace{1em}} e=1
\\\ \binom{e+n-1}{e}-n &\mbox{si  \hspace{1em}} e\geq2.
\end{cases}$$

\item Si $X$ n'a pas de point fermé de profondeur $1$,  $$\codim_{\mathcal{H}_e}(\mathcal{F}^{\rm{int}}_e(X))\geq
\begin{cases}
\ e+1        &\mbox{si  \hspace{1em}} n\geq3
\\ e-1        &\mbox{si  \hspace{1em}} n=2 \text{ et } e\geq2
\\\ 1        &\mbox{si  \hspace{1em}} n=2 \text{ et }  e=1.
\end{cases}$$

\item Si $X$ possède un point fermé de profondeur $1$, $$\codim_{\mathcal{H}_e}(\mathcal{F}^{\rm{int}}_e(X))=1 \text{ pour tout } e.$$

De plus, ces bornes sont optimales.
\end{enumerate}

 \end{thm}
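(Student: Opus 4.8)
Le plan est de d\'eduire ces \'enonc\'es du Th\'eor\`eme \ref{codimigr} en analysant l'\'ecart entre int\'egrit\'e et \og irr\'eductibilit\'e g\'en\'eriquement r\'eduite\fg. Comme une intersection g\'eom\'etriquement int\`egre est g\'eom\'etriquement irr\'eductible et g\'en\'eriquement r\'eduite, on a l'inclusion $\mathcal{F}^{\rm{igr}}_e(X)\subseteq\mathcal{F}^{\rm{int}}_e(X)$, et je note $\mathcal{E}_e(X)=\mathcal{F}^{\rm{int}}_e(X)\setminus\mathcal{F}^{\rm{igr}}_e(X)$ le lieu des $H$ tels que $X\cap H$ soit g\'eom\'etriquement irr\'eductible, g\'en\'eriquement r\'eduite et de codimension $1$, mais non g\'eom\'etriquement r\'eduite. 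Le Th\'eor\`eme \ref{bertinigr} assurant que ces ensembles sont ferm\'es, on a l'\'egalit\'e $\codim_{\mathcal{H}_e}(\mathcal{F}^{\rm{int}}_e(X))=\min(\codim_{\mathcal{H}_e}(\mathcal{F}^{\rm{igr}}_e(X)),\codim_{\mathcal{H}_e}(\mathcal{E}_e(X)))$, et tout revient \`a contr\^oler $\mathcal{E}_e(X)$. La cl\'e est le dictionnaire suivant, issu du crit\`ere de Serre. Si $H$ ne contient pas $X$, son \'equation est un non-diviseur de z\'ero sur $X$ int\`egre, donc $\prof(\mathcal{O}_{X\cap H,x})=\prof(\mathcal{O}_{X,x})-1$ en tout $x\in X\cap H$. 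Puisque $X\cap H$ est d\'ej\`a suppos\'ee g\'en\'eriquement r\'eduite ($R_0$) dans $\mathcal{E}_e(X)$, sa non-r\'eduction \'equivaut \`a l'\'echec de $S_1$, \cad \`a l'existence d'un point immerg\'e $x$; un tel $x$ v\'erifie $\prof(\mathcal{O}_{X\cap H,x})=0$ et $\dim(\mathcal{O}_{X\cap H,x})\geq1$, donc $\prof(\mathcal{O}_{X,x})=1$ et $\codim_X(x)\geq2$. Ainsi l'apparition d'un point immerg\'e force $H$ \`a contenir $Z:=\overline{\{x\}}$, o\`u $x$ est un point de profondeur $1$ et de codimension $\geq2$; r\'eciproquement, l'hypoth\`ese de (i) signifie exactement que $X$ v\'erifie $S_2$.

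On en d\'eduit imm\'ediatement le point (i): si $X$ v\'erifie $S_2$, aucun point immerg\'e ne peut appara\^{\i}tre, donc $\mathcal{E}_e(X)=\emptyset$, d'o\`u $\mathcal{F}^{\rm{int}}_e(X)=\mathcal{F}^{\rm{igr}}_e(X)$, et (i) est le Th\'eor\`eme \ref{codimigr}. De m\^eme, lorsque $n=2$, un point de codimension $\geq2$ est ferm\'e, de sorte que l'hypoth\`ese de (ii) \'equivaut alors \`a $S_2$: le cas $n=2$ de (ii) co\"{\i}ncide avec (i), dont les bornes redonnent bien $e-1$ (pour $e\geq2$) et $1$ (pour $e=1$).

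Pour (iii), si $x$ est un point ferm\'e de profondeur $1$, l'hyperplan $\{H:x\in H\}$ de $\mathcal{H}_e$ est de codimension $1$, et son membre g\'en\'erique $H$ ne contient pas $X$ et coupe $X$ en une intersection g\'eom\'etriquement irr\'eductible et g\'en\'eriquement r\'eduite (th\'eor\`eme de Bertini), poss\'edant un point immerg\'e en $x$ d'apr\`es le dictionnaire ci-dessus. Un tel $H$ appartient donc \`a $\mathcal{E}_e(X)\subseteq\mathcal{F}^{\rm{int}}_e(X)$, d'o\`u $\codim_{\mathcal{H}_e}(\mathcal{F}^{\rm{int}}_e(X))\leq1$; joint au Th\'eor\`eme \ref{bertinigr}, ceci donne l'\'egalit\'e. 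Pour le cas restant de (ii), \cad $n\geq3$, le Th\'eor\`eme \ref{codimigr} fournit d\'ej\`a $\codim_{\mathcal{H}_e}(\mathcal{F}^{\rm{igr}}_e(X))\geq e+1$, et il reste \`a majorer la dimension de $\mathcal{E}_e(X)$. D'apr\`es le dictionnaire, tout $H\in\mathcal{E}_e(X)$ contient une sous-vari\'et\'e int\`egre $Z\subseteq X$ de dimension $\geq1$ (car $x$ n'est pas ferm\'e). Or contenir une telle $Z$ impose au moins $e+1$ conditions: en projetant $Z$ de mani\`ere g\'en\'erique sur $\mathbb{P}^1$ via deux formes lin\'eaires $L_0,L_1$, les $e+1$ mon\^omes $L_0^iL_1^{e-i}$ ont des restrictions \`a $Z$ lin\'eairement ind\'ependantes (l'image de $Z$ \'etant dense dans $\mathbb{P}^1$), d'o\`u $\codim_{\mathcal{H}_e}\{H:Z\subseteq H\}\geq e+1$.

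L'obstacle principal est de passer de cette estimation \og fibre par fibre\fg \hspace{0.2mm} \`a une majoration de $\codim_{\mathcal{H}_e}(\mathcal{E}_e(X))$, car $Z$ parcourt a priori une famille infinie. Je traiterai ce point par un argument de vari\'et\'e d'incidence: le lieu des points de profondeur $1$ et de codimension $\geq2$ est contenu dans le ferm\'e $N\subseteq X$ o\`u $S_2$ \'echoue, de dimension $\leq n-2$; en param\'etrant les $Z$ possibles par une stratification de $N$ et en comparant la dimension de cette base \`a la chute de dimension $\geq e+1$ dans les fibres $\{H:Z\subseteq H\}$, on obtient $\codim_{\mathcal{H}_e}(\mathcal{E}_e(X))\geq e+1$. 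Les points d\'elicats y sont le comportement de la profondeur le long de $N$ (pour garantir qu'un point immerg\'e appara\^{\i}t effectivement) ainsi que les questions de r\'eduction g\'eom\'etrique en caract\'eristique positive, que l'on r\`egle par extension des scalaires \`a $\overline{K}$. Enfin, l'optimalit\'e s'obtient en exhibant des exemples \og extr\'emaux\fg: espaces lin\'eaires et c\^ones sur une courbe pour (i) et (ii), et une vari\'et\'e poss\'edant un point ferm\'e de profondeur $1$ pour (iii), pour lesquels on v\'erifie que les bornes sont atteintes.
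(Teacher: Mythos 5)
Your decomposition of $\mathcal{F}^{\rm{int}}_e(X)$ and your dictionary (the embedded points of $X\cap H$ are exactly the points of $X$ of depth $1$ and codimension $\geq 2$ lying on $H$, when $X\nsubseteq H$) are precisely the paper's Lemma \ref{hehe}, and your treatment of (i), of (iii), and of the case $n=2$ of (ii) matches the paper's proof. The genuine gap is in the case $n\geq3$ of (ii), exactly at the point you yourself flag as the main obstacle. The paper resolves it with Lemma \ref{profun}: a reduced $K$-scheme of finite type has only \emph{finitely many} points of depth $1$ and codimension $\geq2$. The proof is short: the function $\coprof$ is upper semicontinuous by \cite{EGA43} 12.1.1 (v); for $m\geq1$ the closed set $F_m=\{x\mid\coprof(x)\geq m\}$ contains no point of codimension $m$, since such a point would have depth $0$ and would therefore be an embedded point of the reduced scheme $X$; hence the points of depth $1$ and codimension $m+1$ are among the generic points of the components of $F_m$, finite in number. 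Consequently the subvarieties $Z$ occurring in your argument form a \emph{finite} set, $\mathcal{E}_e(X)$ is contained in a finite union $\bigcup_i\{H\mid Z_i\subseteq H\}\cup\bigcup_j\{H\mid y_j\in H\}$, and the bound $e+1$ follows at once from Lemma \ref{codimhyper}.

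Your proposed substitute (stratify the non-$S_2$ locus $N$ and compare the dimension of the base with the codimension $\geq e+1$ of the fibers $\{H\mid Z\subseteq H\}$) cannot close the argument as stated: if the $Z$'s moved in an $m$-dimensional family, this count would only yield $\codim\geq e+1-m$, which is insufficient unless $m=0$, \cad{} unless you already know finiteness. So the finiteness statement is not a technical convenience but the missing key ingredient. A secondary issue: your extremal examples for (ii), linear spaces and cones, cannot attain the bound $e+1$ when $n\geq3$, since these are linear spaces or hypersurfaces, hence Cohen-Macaulay, hence $S_2$, so that for them $\mathcal{F}^{\rm{int}}_e=\mathcal{F}^{\rm{igr}}_e$ has the much larger codimension of Theorem \ref{codimigr}; the paper's example for this case is a variety containing a non-closed point of depth $1$ whose closure is a line, but no closed point of depth $1$.
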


Remarquons que la condition \og ne pas posséder de point fermé de
profondeur $1$ et de codimension $>1$\fg \hspace{0.2mm} est la condition $S2$ de Serre. Elle est en particulier vérifiée pour
les variétés normales et les variétés de Cohen-Macaulay.

\vspace{1em}

  Enfin, dans une quatrième partie, on obtient le théorème suivant répon\-dant à la question \ref{quest0}. La restriction sur les fibres de la famille, dans le cas intègre, est nécessaire au vu du théorème précédent.

\begin{thm}\label{famillebert}

Soit une famille plate de sous-variétés de dimension $n\geq2$
de $\mathbb{P}^N$, c'est-à-dire un diagramme commutatif de
$K$-schémas de type fini  :$$\xymatrix{
\mathfrak{X}\ar[r]^{i\hspace{1em}}\ar[d]_{\pi}& \mathbb{P}^N\times V \ar[ld]^{pr_2}                          \\
V   }$$où $\pi$ est plat à fibres géométriquement intègres de dimension $n\geq2$ et
$i$ est une immersion fermée. Alors :

\begin{enumerate}[(i)]
\item Soit $e$ tel que $\binom{e+n-1}{e}-n-1\geq\dim(V)$. L'ensemble des hypersurface $H$ de $\mathbb{P}^N$ de degré
$e$ telles que pour tout $v\in V$, $\mathfrak{X}_v\cap H$ est géométriquement irréductible et génériquement réduit de codimension $1$ dans $\mathfrak{X}_v$ contient un ouvert non vide.

\item Soit $e\geq\dim(V)+2$. Supposons que les $\mathfrak{X}_v$ n'ont pas de points fermés de profondeur
$1$. Alors l'ensemble des hypersurfaces $H$ de $\mathbb{P}^N$ de degré
$e$ telles que pour tout $v\in V$, $\mathfrak{X}_v\cap H$ est géométriquement intègre de codimension $1$ dans $\mathfrak{X}_v$ contient un ouvert non vide.
\end{enumerate}
Si $V$ est propre, ces ensembles sont des ouverts non vides. 
Enfin, quand le corps $K$ est infini, on peut trouver une telle hypersurface $H$ définie sur $K$.

 \end{thm}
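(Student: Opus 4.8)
Le plan est de former une vari\'et\'e d'incidence dans $\mathcal{H}_e\times V$, d'en majorer la dimension \`a l'aide des th\'eor\`emes \ref{codimigr} et \ref{codimint}, puis de conclure en la projetant sur $\mathcal{H}_e$. Pour le point (i), je poserais
$$Z=\{(H,v)\in\mathcal{H}_e\times V \ : \ H\in\mathcal{F}^{\rm{igr}}_e(\mathfrak{X}_v)\},$$
o\`u $\mathfrak{X}_v$ d\'esigne la fibre de $\pi$ en $v$, g\'eom\'etriquement int\`egre de dimension $n$ par hypoth\`ese, et je travaillerais de m\^eme avec $\mathcal{F}^{\rm{int}}_e$ pour le point (ii).

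La premi\`ere \'etape, et le point le plus d\'elicat, consiste \`a v\'erifier que $Z$ est ferm\'e dans $\mathcal{H}_e\times V$ : c'est une version en famille du th\'eor\`eme \ref{bertinigr}. Je consid\'ererais l'intersection universelle $\mathcal{W}\subset\mathbb{P}^N\times\mathcal{H}_e\times V$, dont la fibre en $(H,v)$ est $\mathfrak{X}_v\cap H$. Le lieu o\`u $H$ contient la fibre $\mathfrak{X}_v$ tout enti\`ere (donc o\`u l'intersection n'est pas de codimension $1$) est ferm\'e ; en dehors de ce lieu, $\mathcal{W}\to\mathcal{H}_e\times V$ est propre et plat de dimension relative $n-1$, et les r\'esultats d'ouverture pour les morphismes propres et plats (EGA IV, \S12 --- irr\'eductibilit\'e g\'eom\'etrique, r\'eduction g\'en\'erique, int\'egrit\'e) montrent que le mauvais lieu correspondant est ferm\'e. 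En recollant ces deux ferm\'es, on obtient que $Z$ est ferm\'e. Pour la seule existence d'un ouvert non vide il suffirait de savoir $Z$ constructible, ce que fournit d\'ej\`a EGA IV, \S9.

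La deuxi\`eme \'etape est le comptage de dimension. Pour chaque $v$, la fibre $Z_v$ de la projection $Z\to V$ s'identifie \`a $\mathcal{F}^{\rm{igr}}_e(\mathfrak{X}_v)$, vue sur le corps $\kappa(v)$ ; le th\'eor\`eme \ref{codimigr} appliqu\'e \`a $\mathfrak{X}_v$ donne $\codim_{\mathcal{H}_e}(Z_v)\geq \binom{e+n-1}{e}-n$ pour $e\geq2$, borne $c$ ne d\'ependant que de $e$ et $n$, donc uniforme en $v$. Comme $\mathcal{H}_e$ est irr\'eductible, $\dim(Z_v)\leq\dim(\mathcal{H}_e)-c$, et le th\'eor\`eme sur la dimension des fibres donne $\dim(Z)\leq\dim(V)+\dim(\mathcal{H}_e)-c$. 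L'hypoth\`ese $\binom{e+n-1}{e}-n-1\geq\dim(V)$ s'\'ecrit exactement $\dim(V)<c$, d'o\`u $\dim(Z)<\dim(\mathcal{H}_e)$. En notant $q:Z\to\mathcal{H}_e$ la seconde projection, on a $\dim(\overline{q(Z)})\leq\dim(Z)<\dim(\mathcal{H}_e)$, de sorte que $\mathcal{H}_e\setminus\overline{q(Z)}$ est un ouvert non vide, dont tout point $H$ convient pour tout $v$ (le cas $e=1$ \'etant vide). Le point (ii) se traite identiquement en invoquant le th\'eor\`eme \ref{codimint}(ii) : la borne est alors $c=e+1$ si $n\geq3$ et $c=e-1$ si $n=2$, et l'hypoth\`ese $e\geq\dim(V)+2$ donne dans les deux cas $\dim(V)<c$, le cas $n=2$ \'etant celui qui fixe le seuil.

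Il reste les deux assertions finales. Si $V$ est propre, la projection $\mathcal{H}_e\times V\to\mathcal{H}_e$ est propre, donc $q$ l'est aussi puisque $Z$ est ferm\'e, et $q(Z)$ est alors ferm\'e : le bon lieu est exactement son compl\'ementaire, un ouvert non vide. Enfin, si $K$ est infini, un ouvert non vide de $\mathcal{H}_e$ poss\`ede un point $K$-rationnel, ce qui fournit une hypersurface $H$ d\'efinie sur $K$. Le principal obstacle reste la premi\`ere \'etape : contr\^oler le lieu o\`u l'intersection cesse d'\^etre de codimension $1$ et le recoller proprement au lieu de non-int\'egrit\'e, c'est-\`a-dire \'etablir la version en famille du th\'eor\`eme \ref{bertinigr}.
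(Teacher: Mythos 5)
Votre d\'emonstration est correcte et suit essentiellement la m\^eme voie que celle de l'article : m\^eme vari\'et\'e d'incidence (votre $Z$ est le $\mathcal{F}$ de l'article, aux facteurs pr\`es), m\^eme argument de fermeture calqu\'e sur la preuve du th\'eor\`eme \ref{bertinigr} via la famille universelle, la platitude hors du lieu o\`u $H\supseteq\mathfrak{X}_v$ et les r\'esultats d'ouverture d'EGA IV (\S 12), m\^eme comptage de dimension fond\'e sur les th\'eor\`emes \ref{codimigr} et \ref{codimint} (ii), et m\^emes arguments pour $V$ propre et $K$ infini. La seule diff\'erence, anodine, est la toute derni\`ere \'etape : l'article conclut par la non-surjectivit\'e de $p_2$ et la constructibilit\'e de son image (Chevalley), l\`a o\`u vous majorez la dimension de l'adh\'erence $\overline{q(Z)}$ ; les deux variantes sont valables.
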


Ces questions ont été motivées par les constructions de
\cite{Debarreart1} de variétés dont le fibré cotangent est ample. En
particulier, le théorème \ref{codimint}
corrige et précise Lemma 12 de {\it loc. cit.}, erroné. 

Quand $K$ est fini, les méthodes utilisées ici pour prouver le théorème \ref{famillebert} ne permettent pas de construire une hypersurface définie sur $K$. L'analogue de cette question pour la version lisse du théorème de Bertini a été étudiée et résolue par Poonen dans \cite{Poonen}.

\vspace{1em}

Je tiens à remercier chaleureusement O. Debarre pour les nombreux conseils qu'il m'a donnés.

\section{Théorème de Bertini}

On va démontrer dans cette partie le théorème \ref{bertinigr}.

\subsection{Ouverture de la propriété \og irréductible et géné\-ri\-que\-ment réduit\fg}

Pour cela, on va commencer par déterminer des conditions sur une famille de schémas sous lesquelles l'ensemble des fibres qui sont irréductibles et géné\-ri\-que\-ment réduites est ouvert : c'est le rôle de la proposition \ref{ouv}.
 C'est une proposition très proche d'énoncés de \cite{EGA43},
et les démonstrations sont calquées sur celles s'y trouvant. Par
conséquent, on multipliera les références à cet ouvrage.

\begin{prop}\label{ouv}

Soit $f:X\rightarrow S$ un morphisme de schémas propre, plat et de présentation finie. On suppose que les composantes irréductibles des fibres de $f$ sont toutes de la même dimension $n$.
Alors l'ensemble $E$ des $s\in S$ tels que $X_s$
est géométriquement irréductible et génériquement réduit est ouvert dans $S$.

\end{prop}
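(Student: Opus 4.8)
The plan is to reduce the statement to a purely topological criterion. By the standard limit arguments of \cite{EGA43} (EGA IV, §8), the morphism $f$ and all the fibrewise properties involved are compatible with filtered inverse limits of affine base schemes, so I may assume that $S$ is Noetherian, and even affine; flatness, properness and the equidimensionality of the fibres are preserved under this reduction. On a locally Noetherian scheme, a subset is open as soon as it is constructible and stable under generization (EGA 0 / EGA IV 1.10.1). I will therefore establish these two properties of $E$ separately, after which openness follows formally.

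For constructibility, the set of $s$ such that $X_s$ is geometrically irreducible is constructible by EGA IV 9.7.7. A variant of the same argument — generic flatness combined with Noetherian induction on the integral closed subschemes of $S$, examining the behaviour at the generic point of each stratum — shows that the condition \og generically reduced\fg \hspace{0.2mm} also cuts out a constructible set. Intersecting the two gives that $E$ is constructible.

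The substance lies in stability under generization. It suffices to treat $S=\Spec R$ with $R$ a discrete valuation ring, $s$ its closed point and $s'$ its generic point, and to show that $X_s\in E$ forces $X_{s'}\in E$. Since $f$ is flat over $R$, every associated point of the total space $X$ lies over $s'$; hence $X$ has no component contained in the special fibre, and $X$ is the closure of $X_{s'}$. Let $\xi$ be the generic point of $X_s$, unique because $X_s$ is irreducible, and of dimension $n$ by equidimensionality. Generic reducedness of $X_s$ means that $\mathcal{O}_{X,\xi}/\pi\mathcal{O}_{X,\xi}$ is a field; as $\mathcal{O}_{X,\xi}$ is $R$-flat and killing the uniformizer $\pi$ yields a field, $\mathcal{O}_{X,\xi}$ is a one-dimensional regular local ring, in particular a domain, so every localization of $X$ along $X_{s'}$ is reduced and $X_{s'}$ is generically reduced. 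To propagate irreducibility I use equidimensionality crucially: each irreducible component of $X_{s'}$ has for closure an irreducible component of $X$ meeting $X_s$ in a closed set of dimension $n$ (the fibre dimension is constant equal to $n$ by properness and flatness), and since $X_s$ is irreducible of dimension $n$ these closures all contain $\xi$, forcing $X$, hence $X_{s'}$, to be irreducible. To upgrade \og irréductible\fg \hspace{0.2mm} to \og géométriquement irréductible\fg, I base change along a discrete valuation ring $R'$ dominating $R$ with algebraically closed residue field, which preserves all hypotheses, and apply the same argument to $X_s\times_\kappa\overline{\kappa}$, irreducible by assumption.

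The main obstacle is exactly this generization step. The delicate point is that, without the equidimensionality hypothesis, the generic fibre could acquire an extra component of dimension $<n$ that is invisible on the special fibre; this would make $E$ fail to be stable under generization and destroy openness, so equidimensionality is precisely what must be exploited to rule it out. A second point requiring care is the passage from irreducibility to geometric irreducibility, which forces the base change to rings with algebraically closed residue field together with a verification that flatness, properness and fibrewise equidimensionality all survive that base change.
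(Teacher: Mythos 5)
Your overall architecture matches the paper's (limit reduction to a Noetherian affine base, openness as constructibility plus stability under generization, and a DVR argument at the core), and your local computation at the generic point $\xi$ of the special fibre is correct and in fact slicker than the paper's: $\mathcal{O}_{X,\xi}$ is $R$-flat with $\mathcal{O}_{X,\xi}/\pi\mathcal{O}_{X,\xi}$ a field, hence is a discrete valuation ring; combined with the fact (properness plus equidimensionality plus irreducibility of $X_s$) that every irreducible component of $X$ passes through $\xi$, this yields at once that $X$ has a unique component (a domain has a unique minimal prime) and that the local ring at its generic point, namely $\Frac(\mathcal{O}_{X,\xi})$, is a field, so $X_{s'}$ is irreducible and generically reduced. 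This replaces the paper's length argument via EGA IV 3.4.1.1. Be aware, though, that your write-up inverts the logic: generic reducedness of $X_{s'}$ does not follow from the domain property alone (you must first know that the maximal points of $X$ are generizations of $\xi$), and irreducibility of $X$ does not follow from \emph{all components contain $\xi$} alone (several components may share a point); the two facts must be combined as above.

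The genuine gap is the upgrade from irreducible to geometrically irreducible. Base-changing to a DVR $R'$ dominating $R$ with algebraically closed \emph{residue} field strengthens the wrong end of the family: what your argument then proves is that $X_{s'}\otimes_K K'$ is irreducible and generically reduced, where $K'=\Frac(R')$. But $K'$ is never algebraically closed (an algebraically closed field carries no nontrivial discrete valuation, its value groups being divisible), nor even separably closed or perfect (think of $\overline{\mathbb{F}}_p((t))$), and irreducibility after such an extension does not imply geometric irreducibility of $X_{s'}$: an irreducible $K$-scheme can remain irreducible over $K'$ yet split over $\overline{K}$. The same objection hits geometric generic reducedness. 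This is precisely the difficulty the paper devotes its Steps 2 and 3 to: it first invokes EGA IV 4.6.8 to choose a \emph{finite} extension $\tilde{K}$ of the function field of the base over which the irreducible components of the generic fibre become geometrically irreducible, performs the corresponding finite surjective base change $\tilde{S}\rightarrow S$ (legitimate because membership in $E$ is insensitive to residue field extensions), and only then runs the DVR argument, so that irreducibility of the generic fibre automatically gives geometric irreducibility. A fix within your framework: by Krull-Akizuki, for every finite extension $L/K$ choose a DVR dominating $R$ with fraction field $L$; the special-fibre hypotheses survive any residue extension because they are geometric, and irreducibility plus generic reducedness of $X_{s'}\otimes_K L$ for \emph{all} finite $L/K$ does imply the geometric statements. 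As written, however, your proof does not establish the proposition; a secondary (minor) weakness is that constructibility of the generically reduced locus is only sketched, where the paper cites EGA IV 9.7.7 and 9.8.7.
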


\begin{proof}[$\mathbf{Preuve}$]

On scinde la preuve en plusieurs étapes. 

\begin{etape1}

Sous les hypothèses de l'énoncé, si $S$ est le spectre d'un anneau
de valuation discrète de point générique $s'$ et de point fermé $s$, si de plus les composantes irréductibles de
$X_{s'}$ sont géométriquement irréductibles, alors si
$s\in E$, on a également $s'\in E$.

\end{etape1}

Montrons d'abord que $X_{s'}$ est géométriquement génériquement
réduit. Par \cite{EGA43} 12.1.1 (vii), l'ensemble $U$ des $x\in
X$ tels que $X_{f(x)}$ est géométriquement réduit en $x$ est ouvert
dans $X$. Son complémentaire $F$ est donc fermé. Par hypothèse, $F_s$ est
de dimension $<n$. Or, par propreté de $f$, la dimension des fibres de $f:F\rightarrow S$ est semi-continue supérieurement, donc $\dim(F_{s'})<n$, et $X_{s'}$
est bien géométriquement génériquement réduit.

Supposons ensuite par l'absurde que $X_{s'}$ ne soit pas irréductible, et soient $\eta_1$ et $\eta_2$ deux points maximaux distincts de $X_{s'}$. Par propreté, la dimension des fibres de $f :
\overline{\{\eta_i\}}\rightarrow S$ est semi-continue supérieurement. On peut donc choisir un point maximal $z_i$ de $\overline{\{\eta_i\}}_s$ dont l'adhérence est de dimension $\geq n$. Comme
les composantes irréductibles de $X_s$ sont de dimension $n$ par hypothèse, $z_i$
est nécessairement un point maximal de $X_s$.

Par \cite{EGA42} 2.3.4, qui s'applique par platitude, on voit que
les points maximaux de $X$ sont exactement ceux de $X_{s'}$. Cela
permet de vérifier les hypothèses de \cite{EGA42} 3.4.1.1 et de
montrer que si $z_1$ et $z_2$ coïncidaient, on aurait
$$\lon((\mathcal{O}_{X_s})_{z_1})\geq
\lon((\mathcal{O}_{X})_{\eta_1})+\lon((\mathcal{O}_{X})_{\eta_2})\ge
2,$$ ce qui est impossible car $X_s$ est génériquement réduit.
Ainsi, $z_1\neq z_2$, donc $X_s$ n'est pas irréductible, ce qui est absurde.

\begin{etape2}

Sous les hypothèses de l'énoncé, si $S$ est affine noethérien
intègre de point générique $s'$, si de plus les composantes
irréductibles de $X_{s'}$ sont géomé\-tri\-que\-ment
irréductibles, et si $E$ est non vide, alors $s'\in E$.

\end{etape2}

Supposons que $s\in E$ est différent de $s'$. Par \cite{EGA2} 7.1.7, 
qui s'applique par noethé\-ria\-ni\-té, on peut trouver un schéma
$T$, spectre d'un anneau de valuation discrète de point fermé $t$ et
de point générique $t'$, et un morphisme $g :T\rightarrow S$ tel que
$g(t)=s$ et $g(t')=s'$. On peut alors appliquer l'étape $1$ après
changement de base par $g$, ce qui montre $s'\in E$.

\begin{etape3}

Sous les hypothèses de l'énoncé, si $S=\Spec(A)$ est affine
noethé\-rien, $E$ est stable par générisation.

\end{etape3}

On se donne $s\in E$ et $s'$ une générisation de $s$ : on veut
montrer que $s'\in E$. Quitte à remplacer $S$ par $\overline{\{s'\}}$,
on peut supposer $S$ intègre de point générique $s'$.

Par \cite{EGA42} 4.6.8, il existe une extension finie $\tilde{K}$
de $K=\Frac(A)$ telle que les composantes irréductibles de $(X_{s'})_{\tilde{K}}$ soient
géométriquement irréductibles. Comme il existe une base de
$\tilde{K}$ sur $K$ formée d'éléments entiers sur $A$, l'anneau
$\tilde{A}$ engendré par ces éléments est fini sur $A$ et est de
corps de fractions $\tilde{K}$. On pose
$\tilde{S}=\Spec(\tilde{A})$, de sorte que le morphisme $g
:\tilde{S}\rightarrow S$ est surjectif car fini et dominant.

On peut alors appliquer l'étape $2$ après avoir changé de base
par $g$. Notons $\tilde{E}$ l'ensemble évident.  On a
$\tilde{E}=g^{-1}(E)$, donc $E=g(\tilde{E})$ par surjectivité de
$g$. Comme $\tilde{E}$ contient le point générique de $\tilde{S}$,
$E$ contient bien le point générique de $S$, ce qui conclut.

\begin{etape4}

Sous les hypothèses de l'énoncé, $E$ est ouvert.

\end{etape4}

Comme la proposition est locale sur $S$, on peut supposer que
$S=\Spec(A)$ est affine. 

Quand $S$ est noethérien, on va montrer que $E$ est ouvert en appliquant le critère \cite{EGA3} 
0.9.2.5. Vérifions-en les hypothèses. D'une part, $E$ est 
stable par générisation par l'étape $3$. D'autre part, $E$ est localement constructible par \cite{EGA43} 9.7.7 et 9.8.7. 

Si $A$ n'est pas noethérien, on écrit $A$ comme limite inductive de
ses sous-anneaux de type fini sur $\mathbb{Z}$. Alors, par
\cite{EGA43} 8.9.1, 8.10.5 (xii) et 11.2.6, on peut trouver un
sous-anneau noethérien $A_0$ de $A$ et un morphisme $f_0 : X_0\rightarrow\Spec(A_0)$ vérifiant les même propriétés que $f$, et tel que
$f$ soit obtenu à partir de $f_0$ par
extension des scalaires. Si $g:\Spec(A)\rightarrow \Spec(A_0)$ est
le changement de base et si $E_0$ est l'ensemble évident, on a
$E=g^{-1}(E_0)$. Comme $E_0$ est ouvert par le cas noethérien, $E$
est ouvert. D'où le résultat.

\end{proof}

\subsection{Démonstration du théorème}\label{preuvebertinigr}

Démontrons maintenant la version dont nous aurons besoin du théorème de Bertini, c'est-à-dire le théorème \ref{bertinigr}.

 \begin{proof}[$\mathbf{Preuve \text{ }du \text{ }th\acute{e}or\grave{e}me}$]

  Par \cite{Jouanolou} I 6.10 appliqué à l'inclusion de $X$ dans $\mathbb{P}^N$, $\mathcal{F}^{\rm{int}}_e(X)$ est strictement inclus dans $\mathcal{H}_e$. Comme $\mathcal{F}^{\rm{igr}}_e(X)\subset\mathcal{F}^{\rm{int}}_e(X)$, il suffit de montrer que $\mathcal{F}^{\rm{int}}_e(X)$ et $\mathcal{F}^{\rm{igr}}_e(X)$ sont fermés dans $\mathcal{H}_e$. On fait la preuve pour $\mathcal{F}^{\rm{igr}}_e(X)$ ; la fermeture de $\mathcal{F}^{\rm{int}}_e(X)$ se montre de la même manière en utilisant \cite{EGA43} 12.2.4 (viii) à la place de la proposition \ref{ouv}.

On introduit $\mathfrak{Z}\subset\mathbb{P}^N\times\mathcal{H}_e$ la
famille paramétrant les $\mathfrak{Z}_{H}=X\cap H$ et $q :
\mathfrak{Z}\rightarrow\mathcal{H}_e$ la projection canonique. On note $\mathcal{U}$ l'ouvert 
de $\mathcal{H}_e$ où $X\nsubseteq
 H$. Comme le polynôme de Hilbert de ses fibres est constant, en appliquant le critère III 9.9 de \cite{Hartshorne}, on voit que
 $q_\mathcal{U}:q^{-1}(\mathcal{U})\rightarrow \mathcal{U}$ est plat.
Soit alors $\mathcal{U}'$ le sous-ensemble de $\mathcal{U}$ constitué des $H$ tels
que $X\cap H$ est géométriquement irréductible et génériquement réduit. Il est ouvert par la proposition  \ref{ouv}. Par conséquent, $\mathcal{F}^{\rm{igr}}_e(X)$, qui est le complémentaire de $\mathcal{U}'$ dans $\mathcal{H}_e$, est fermé.

\end{proof}

\section{Minoration de la codimension de $\mathcal{F}^{\rm{igr}}_e(X)$}

Cette partie contient la démonstration du théorème \ref{codimigr}. Comme celui-ci est insensible à l'extension des scalaires, on supposera que $K$ est algébriquement clos.

L'idée d'utiliser une dégénérescence de $X$ vers une réunion d'espaces linéaires pour montrer ce théorème est due à Zak.

\subsection{Optimalité des minorants}

On montre dans ce paragraphe que les minorations du théorème \ref{codimigr} sont optimales.

\begin{prop}

Soit $n\geq2$ et $e\geq1$. Alors on peut trouver une sous-variété $X$ d'un espace projectif $\mathbb{P}^N$ pour laquelle l'inégalité du théorème \ref{codimigr} est une égalité.

\end{prop}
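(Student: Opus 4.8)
The strategy is to treat the two regimes $e=1$ and $e\ge 2$ separately, exhibiting in each the variety already named in Theorem~\ref{codimigr} and checking that it realises equality. Since that theorem already supplies the lower bound for $\codim_{\mathcal{H}_e}(\mathcal{F}^{\rm{igr}}_e(X))$, I only have to produce, in each case, a subfamily of $\mathcal{F}^{\rm{igr}}_e(X)$ of dimension large enough to force the opposite inequality; the two bounds then coincide. Leaning on the lower bound is precisely what spares us from having to bound the whole non-irreducible locus from below by hand.

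For $e\ge 2$ I take $X=\mathbb{P}^n\subseteq\mathbb{P}^N$ a linear subspace. For a hypersurface $H=\{f=0\}$ not containing $X$, the scheme $X\cap H$ is cut out on $X\cong\mathbb{P}^n$ by the restriction $f|_X$, and it fails to be geometrically irreducible and generically reduced exactly when $f|_X$ is not a scalar multiple of an irreducible form. As the restriction $H^0(\mathbb{P}^N,\mathcal{O}(e))\to H^0(\mathbb{P}^n,\mathcal{O}(e))$ is a surjective linear map, it suffices to exhibit inside $\mathbb{P}(H^0(\mathbb{P}^n,\mathcal{O}(e)))$ a family of non-irreducible forms of codimension $\binom{e+n-1}{e}-n$: its preimage is then a subvariety of $\mathcal{F}^{\rm{igr}}_e(X)$ of the same codimension in $\mathcal{H}_e$. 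The family I single out is that of forms admitting a linear factor, i.e. the image of the multiplication map
$$\mathbb{P}\big(H^0(\mathbb{P}^n,\mathcal{O}(1))\big)\times\mathbb{P}\big(H^0(\mathbb{P}^n,\mathcal{O}(e-1))\big)\longrightarrow\mathbb{P}\big(H^0(\mathbb{P}^n,\mathcal{O}(e))\big).$$
Unique factorisation makes this map generically finite onto its image, so the image has dimension $n+\binom{e+n-1}{n}-1$; Pascal's rule rewrites its codimension as $\binom{e+n-1}{e}-n$. Hence $\codim_{\mathcal{H}_e}(\mathcal{F}^{\rm{igr}}_e(X))\le\binom{e+n-1}{e}-n$, and equality follows from Theorem~\ref{codimigr}.

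For $e=1$ a linear space is useless, as all its hyperplane sections stay irreducible; following the theorem I replace it by a cone. I fix an integral plane curve $C\subset\mathbb{P}^2$ of degree $d\ge 2$, realise $\mathbb{P}^2$ and a disjoint linear space $\Lambda\cong\mathbb{P}^{n-2}$ inside $\mathbb{P}^{n+1}$, and let $X$ be their join, the cone over $C$ with vertex $\Lambda$; it is an integral variety of dimension $n$, defined by the equation of $C$ in the ambient coordinates. The hyperplanes of $\mathbb{P}^{n+1}$ containing $\Lambda$ form a $\mathbb{P}^2\subset\mathcal{H}_1$. For a general such $H$, the trace $H\cap\mathbb{P}^2$ is a line meeting $C$ in $d$ distinct points, and $X\cap H$ is the cone over these points with vertex $\Lambda$, hence a union of $d\ge 2$ linear spaces $\mathbb{P}^{n-1}$; this intersection is of codimension $1$ but reducible, so $H\in\mathcal{F}^{\rm{igr}}_1(X)$. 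Thus $\mathcal{F}^{\rm{igr}}_1(X)$ contains a dense open subset of this $\mathbb{P}^2$, whence $\dim\mathcal{F}^{\rm{igr}}_1(X)\ge 2$ and $\codim_{\mathcal{H}_1}(\mathcal{F}^{\rm{igr}}_1(X))\le(n+1)-2=n-1$; again Theorem~\ref{codimigr} yields equality.

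The dimension counts themselves are routine; the only real decisions are geometric. I expect the delicate point to be the case $e=1$: one must choose $X$ so that a whole $2$-dimensional family of sections genuinely breaks into several pieces---forcing both $d\ge 2$ and the cone structure with vertex of the correct dimension $n-2$---and then see that the hyperplanes through $\Lambda$ account for exactly the codimension $n-1$ predicted by the bound. Relying on the lower bound of Theorem~\ref{codimigr} is what lets me stop after producing this one family, rather than ruling out by hand some other, potentially larger, degeneration.
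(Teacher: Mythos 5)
Your proposal is correct and takes essentially the same route as the paper: for $e\geq 2$ the linear space with the locus of forms admitting a linear factor (the paper's lemme \ref{caslineaire} performs the same count, minimizing over all splitting types $k$ and finding the minimum at $k\in\{1,e-1\}$, i.e.\ precisely your linear-factor stratum), and for $e=1$ a cone over a curve of degree $\geq 2$ with vertex of dimension $n-2$, using the family of hyperplanes through the vertex (the paper computes its codimension $n-1$ via le lemme \ref{codimhyper}). The only cosmetic difference is in the case $e=1$: you restrict to general hyperplanes whose trace meets $C$ in $d$ distinct points (a standard but unproven transversality claim), whereas the paper observes that \emph{every} hyperplane through the vertex fails, since the intersection is a union of linear spaces of total degree $d\geq 2$ and so cannot be irreducible and generically reduced for degree reasons.
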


\begin{proof}[$\mathbf{Preuve}$]

Si $e\geq2$, le lemme \ref{caslineaire} ci-dessous montre qu'on peut prendre pour $X$ un sous-espace linéaire de dimension $n$ de $\mathbb{P}^N$.

Si $e=1$, on choisit pour $X$ un cône de base une courbe intègre de degré $\geq2$ et de sommet un sous-espace linéaire $L$ de dimension $n-2$ de $\mathbb{P}^N$. Notons $\mathcal{G}$ l'ensemble des éléments de $\mathcal{H}_{1}$ contenant $L$. Si $H\in \mathcal{G}$,  $X\cap H$ est réunion de sous-espaces linéaires de $\mathbb{P}^N$ et ne peut donc être irréductible et génériquement réduit pour raison de degré. Par conséquent, $\mathcal{G}\subset\mathcal{F}^{\rm{igr}}_1(X)$. D'autre part, par le lemme \ref{codimhyper} ci-dessous, $\codim_{\mathcal{H}_1}(\mathcal{G})=n-1$. On en déduit $\codim_{\mathcal{H}_1}(\mathcal{F}^{\rm{igr}}_1(X))\leq n-1$ et donc que l'inégalité de \ref{codimigr} est une égalité.

\end{proof}

\begin{lemme}\label{caslineaire}
Soit $X$ un sous-espace linéaire de dimension $n$ de $\mathbb{P}^N$, et $e\geq2$. Alors :

$$\codim_{\mathcal{H}_{e}}(\mathcal{F}^{\rm{igr}}_e(X))={e+n-1 \choose e}-n$$
\end{lemme}

\begin{proof}

On est immédiatement ramenés au cas où $N=n$ et $X=\mathbb{P}^n$.
Les points fermés de $\mathcal{F}^{\rm{igr}}_e(\mathbb{P}^n)$ correspondent aux
hypersurfaces dont une équation n'est pas irréductible, donc est
réunion de sous-ensembles correspondant aux degrés $k$ et $e-k$ des
deux facteurs d'une décomposition. Ceci permet de calculer :

$$\begin{array}{llcl} \codim_{\mathcal{H}_{e}}(\mathcal{F}^{\rm{igr}}_e(\mathbb{P}^n))&= & \min_{1\leq k\leq e-1}\left({e+n \choose
n}-{k+n \choose n}-{e-k+n \choose
n}+1\right)\\

&=& \min_{k\in\{1,e-1\}}\left({e+n \choose n}-{k+n \choose n}-{e-k+n
\choose
n}+1\right)\\

&=&{e+n \choose n}-n-1-{e-1+n \choose
n}+1\\

&=&{e+n-1 \choose e}-n.\\

\end{array}$$

\end{proof}

\begin{lemme}\label{codimhyper}

Soit $X$ une sous-variété de $\mathbb{P}^N$ de dimension $n$.
Notons $\mathcal{G}$ l'ensemble des hypersurfaces de degré $e$ contenant $X$.
On a $$\codim_{\mathcal{H}_e}(\mathcal{G})\geq\binom{e+n}{e}.$$
De plus, quand $X$ est un sous-espace linéaire de $\mathbb{P}^N$, on a égalité.

\end{lemme}

\begin{proof}[$\mathbf{Preuve}$]

Soit $L$ un sous-espace linéaire de dimension $N-n-1$ de
$\mathbb{P}^N$ ne rencontrant pas $X$. Les fibres de la projection
$\pi_L:X\rightarrow\mathbb{P}^n$ depuis $L$ sont toutes finies non
vides. Ceci montre que $\mathcal{G}$ et l'ensemble $\mathcal{C}$ des cônes de sommet $L$
ne s'intersectent pas dans $\mathcal{H}_e$. On peut alors appliquer
le théorème de l'intersection projective :
$$\codim_{\mathcal{H}_e}(\mathcal{G})\geq \dim(\mathcal{C})+1=\binom{e+n}{e}.$$

Finalement, quand $X$ est un sous-espace linéaire de $\mathbb{P}^N$, $X$ et $L$ sont supplé\-men\-taires dans $\mathbb{P}^N$, et on voit facilement que $\mathcal{G}$ et $\mathcal{C}$ sont des sous-espaces linéaires supplémentaires dans $\mathcal{H}_e$. On a alors $$\codim_{\mathcal{H}_e}(\mathcal{G})= \dim(\mathcal{C})+1=\binom{e+n}{e}.$$

\end{proof}

\subsection{Réduction au cas d'une hypersurface qui est un cône sur une courbe plane}

Dans ce paragraphe, on prépare la preuve du théorème \ref{codimigr} en effectuant un certain nombre de réductions. On commence par se ramener par projection au cas où $X$ est une hypersurface.

\begin{prop}\label{reduc1}

Le théorème \ref{codimigr} se déduit du cas particulier où $X$ est une hypersurface.

\end{prop}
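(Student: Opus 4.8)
Le plan est de se ramener au cas d'une hypersurface par une projection lin\'{e}aire birationnelle, puis de comparer les codimensions par une tranche. Notons $b(n,e)$ le minorant figurant dans le th\'{e}or\`{e}me \ref{codimigr}, qui ne d\'{e}pend que de $n$ et de $e$. Le corps $K$ \'{e}tant alg\'{e}briquement clos, donc parfait, le corps de fonctions $K(X)$ est s\'{e}parablement engendr\'{e} ; par le th\'{e}or\`{e}me de l'\'{e}l\'{e}ment primitif, $X$ est birationnelle \`{a} une hypersurface de $\mathbb{P}^{n+1}$. Je commencerais donc par choisir un sous-espace lin\'{e}aire $L$ de dimension $N-n-2$ disjoint de $X$ tel que la projection $\pi_L : X \to X' := \pi_L(X) \subset \mathbb{P}^{n+1}$ soit finie et birationnelle sur son image, $X'$ \'{e}tant alors une hypersurface int\`{e}gre de dimension $n$. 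Je noterais $V' \subseteq X'$ l'ouvert dense au-dessus duquel $\pi_L$ est un isomorphisme et $B' = X' \setminus V'$ son compl\'{e}mentaire, ferm\'{e} propre.

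La projection fournit une inclusion lin\'{e}aire $\pi_L^* : H^0(\mathbb{P}^{n+1},\mathcal{O}(e)) \hookrightarrow H^0(\mathbb{P}^N,\mathcal{O}(e))$, d'o\`{u} un plongement lin\'{e}aire $\iota : \mathcal{H}_{e,n+1} \hookrightarrow \mathcal{H}_{e,N}$ dont l'image $\Lambda$ est l'espace des c\^{o}nes de sommet $L$. Pour $H' \in \mathcal{H}_{e,n+1}$ et son c\^{o}ne $H = \iota(H')$, on a l'\'{e}galit\'{e} sch\'{e}matique $X \cap H = \pi_L^{-1}(X' \cap H')$, et $X \subseteq H$ \'{e}quivaut \`{a} $X' \subseteq H'$. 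Comme $\mathcal{F}^{\rm{igr}}_e(X)$ est ferm\'{e} (th\'{e}or\`{e}me \ref{bertinigr}), le th\'{e}or\`{e}me de la dimension de l'intersection projective appliqu\'{e} \`{a} la tranche $\mathcal{F}^{\rm{igr}}_e(X) \cap \Lambda$ donnera
$$\codim_{\mathcal{H}_{e,N}}(\mathcal{F}^{\rm{igr}}_e(X)) \geq \codim_{\mathcal{H}_{e,n+1}}(\iota^{-1}\mathcal{F}^{\rm{igr}}_e(X)).$$
Tout reviendra donc \`{a} minorer la codimension de $\iota^{-1}\mathcal{F}^{\rm{igr}}_e(X)$ dans $\mathcal{H}_{e,n+1}$.

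Le c\oe{}ur de l'argument sera le transfert suivant : si $X' \cap H'$ est g\'{e}om\'{e}triquement irr\'{e}ductible et g\'{e}n\'{e}riquement r\'{e}duit de codimension $1$ et n'est pas contenu dans $B'$, alors $X \cap H = \pi_L^{-1}(X' \cap H')$ l'est aussi. L'argument reposera sur le th\'{e}or\`{e}me de l'id\'{e}al principal de Krull (les deux intersections sont purement de codimension $1$, donc toute composante de $X \cap H$ domine $X' \cap H'$ via le morphisme fini $\pi_L$) et sur le fait que $\pi_L$ est un isomorphisme au-dessus de $V'$ : le point g\'{e}n\'{e}rique de $X' \cap H'$ \'{e}tant dans $V'$, l'ouvert correspondant de $X \cap H$ sera isomorphe \`{a} un ouvert dense de $X' \cap H'$, ce qui forcera $X \cap H$ \`{a} \^{e}tre irr\'{e}ductible et g\'{e}n\'{e}riquement r\'{e}duit. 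Par contrapos\'{e}e, j'obtiendrais l'inclusion $\iota^{-1}\mathcal{F}^{\rm{igr}}_e(X) \subseteq \mathcal{F}^{\rm{igr}}_e(X') \cup G$, o\`{u} $G$ d\'{e}signe l'ensemble des $H'$ contenant $X'$ ou tels que $X' \cap H'$ soit support\'{e} par $B'$.

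Il resterait alors \`{a} conclure gr\^{a}ce \`{a}
$$\codim_{\mathcal{H}_{e,N}}(\mathcal{F}^{\rm{igr}}_e(X)) \geq \min\bigl(\codim(\mathcal{F}^{\rm{igr}}_e(X')),\, \codim(G)\bigr),$$
le premier terme \'{e}tant minor\'{e} par $b(n,e)$ en admettant le th\'{e}or\`{e}me \ref{codimigr} pour l'hypersurface $X'$. Le principal obstacle sera le contr\^{o}le du lieu $G$, li\'{e} au lieu de non-isomorphisme $B'$ : pour une projection g\'{e}n\'{e}rique vers $\mathbb{P}^{n+1}$, $B'$ poss\`{e}de en g\'{e}n\'{e}ral des composantes de codimension $1$, si bien que le transfert de l'irr\'{e}ductibilit\'{e} peut \'{e}chouer le long de ces composantes. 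Je montrerais que $G$ reste de grande codimension : les $H'$ contenant $X'$ forment un ferm\'{e} de codimension $\geq \binom{e+n}{e} \geq b(n,e)$ par le lemme \ref{codimhyper}, et les $H'$ dont la trace sur $X'$ est support\'{e}e par les composantes de codimension $1$ de $B'$ forment une r\'{e}union finie de sous-espaces lin\'{e}aires, dont on v\'{e}rifie que la codimension exc\`{e}de $b(n,e)$. C'est ce dernier point, essentiellement combinatoire, qui demandera le plus de soin.
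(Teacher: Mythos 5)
Your proposal takes essentially the same route as the paper's proof: reduce to a hypersurface $X'\subset\mathbb{P}^{n+1}$ via a finite birational linear projection $\pi_L$, identify $\mathcal{H}_{e,n+1}$ with the linear subspace $\Lambda\subset\mathcal{H}_{e,N}$ of cones with vertex $L$, use the closedness of $\mathcal{F}^{\rm{igr}}_e(X)$ (Theorem \ref{bertinigr}) and the projective dimension theorem to get $\codim_{\mathcal{H}_{e,N}}(\mathcal{F}^{\rm{igr}}_e(X))\geq\codim_{\Lambda}(\mathcal{F}^{\rm{igr}}_e(X)\cap\Lambda)$, prove a transfer inclusion into $\mathcal{F}^{\rm{igr}}_e(X')\cup G$, and bound $G$ by Lemma \ref{codimhyper}. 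Your transfer argument is correct, and if anything cleaner than the paper's: by Krull every component of $X\cap H$ has dimension $n-1$, hence dominates the irreducible $(n-1)$-dimensional $X'\cap H'$ under the finite map $\pi_L$, so no component lies over $B'$ and the isomorphism locus is dense in $X\cap H$. Moreover, the point you single out as the most delicate is in fact immediate: the codimension-one components of $B'$ have dimension $n-1$, so by Lemma \ref{codimhyper} the hypersurfaces of $\mathbb{P}^{n+1}$ containing one of them form a closed subset of codimension at least $\binom{e+n-1}{e}$, which dominates both bounds of Theorem \ref{codimigr}.

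The one genuine weakness is your justification of the projection itself. Separable generation of $K(X)$ together with the primitive element theorem produces an abstract birational hypersurface model of $X$; it does not produce one arising from a linear projection, and your whole comparison of codimensions requires $X'$ to be the image of $X$ under a projection from a linear center $L$ (otherwise there is no linear slice $\Lambda$ of $\mathcal{H}_{e,N}$ to intersect with). The statement you actually need --- that some center $L$ of dimension $N-n-2$ disjoint from $X$ makes $\pi_L|_X$ finite and birational onto its image --- is true in every characteristic, but in characteristic $p$ it is precisely the step that requires care, since separability of the projection can fail for special centers. The paper supplies exactly this missing argument: by the smooth Bertini theorem choose $L_1$ of dimension $N-n$ meeting $X$ in finitely many reduced points $P_i$, then hyperplanes $L\subset L_2\subset L_1$ so that $\pi_L|_X^{-1}(\pi_L(P_1))$ is the single reduced point $P_1$; properness and semicontinuity then force $\pi_L|_X$ to be generically finite of degree one, i.e. birational. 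You should include such a construction or a characteristic-free reference. Finally, your setup tacitly assumes $X\neq\mathbb{P}^N$ (there is no center $L$ when $n=N$); that remaining case must be handled separately, as the paper does, by Lemma \ref{caslineaire} for $e\geq2$ and Lemma \ref{codimhyper} for $e=1$.
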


\begin{proof}[$\mathbf{Preuve}$]

Si $X=\mathbb{P}^N$, c'est le lemme \ref{caslineaire} quand $e\geq2$ ou le lemme \ref{codimhyper} quand $e=1$.
Si $X$ est une hypersurface, il n'y a rien à démontrer. 

Si $X$ est de codimension $\geq2$ dans $\mathbb{P}^N$, on choisit, par le théorème de Bertini lisse, un sous-espace linéaire $L_1$ de dimension $N-n$ de $\mathbb{P}^N$ dont l'intersection avec $X$ est constituée de points réduits $P_i$. Soit $L_2$ un hyperplan de $L_1$ contenant $P_1$ et aucun des $P_i$, $i>1$, et $L$ un hyperplan de $L_2$ ne contenant pas $P_1$. On note $\pi:\mathbb{P}^N\rightarrow \mathbb{P}^{n+1}$ la projection depuis $L$ et $Q=\pi(P_1)$. La variété $Y=\pi(X)$ munie de sa structure réduite est une hypersurface de $\mathbb{P}^{n+1}$ ; on considère $\pi|_X:X\rightarrow Y$. Par construction, $\pi|_X^{-1}(Q)$ est le point réduit $P_1$. Par conséquent, comme $\pi|_X$ est propre, $\pi|_X$ est génériquement fini de degré $1$, c'est-à-dire birationnel. On notera $E$ le fermé strict de $Y$ au-dessus duquel $\pi|_X$ n'est pas un isomorphisme.

Soit $\mathcal{C}$ le fermé de $\mathcal{H}_{e,N}$ constitué des
cônes de sommet $L$. Par théorème de l'intersection projective,
$$\codim_{\mathcal{H}_{e,N}}(\mathcal{F}^{igr}_e(X))\geq
\codim_\mathcal{C}(\mathcal{F}^{igr}_e(X)\cap \mathcal{C}).$$

 On
vérifie en comparant les diviseurs de Cartier que
$\pi^*\mathcal{O}_{\mathbb{P}^{n+1}}(e)=\mathcal{O}_{\mathbb{P}^{N}\setminus\{L\}}(e)$,
et que tirer en arrière les sections globales induit une bijection
$\pi^*$ entre $\mathcal{H}_{e,n+1}$ et $\mathcal{C}$. Moyennant
cette identification entre $\mathcal{H}_{e,n+1}$ et $\mathcal{C}$,
on va montrer que $$(\mathcal{F}^{igr}_e(X)\cap
\mathcal{C})\subset\mathcal{F}^{igr}_e(Y)\cup \mathcal{G} ,$$ où $\mathcal{G}$ désigne
l'ensemble des hypersurfaces contenant une composante irréductible de $E$ de codimension
$1$ dans $Y$. On pourra alors conclure en appliquant l'hypothèse à $Y$ d'une part, et le lemme \ref{codimhyper} d'autre part.

Pour cela, soit $H\notin \mathcal{F}^{igr}_e(Y)\cup \mathcal{G}$. Comme
$H\notin \mathcal{G}$, $(Y\cap H)\setminus E$ est dense dans  $(Y\cap H)$, et $\pi|_X$ étant surjectif,
$(X\cap \pi^*H)\setminus \pi|_X^{-1}(E)$ est dense dans  $(X\cap \pi^*H)$. Comme $S\notin \mathcal{F}^{igr}_e(Y)$, $(Y\cap H)\setminus E$ est irréductible et génériquement réduit. C'est donc aussi le cas de $(X\cap \pi^*H)\setminus \pi|_X^{-1}(E)$ qui lui est isomorphe, et de $(X\cap \pi^*H)$ par densité. On a donc bien $H\notin (\mathcal{F}^{igr}_e(X)\cap
\mathcal{C})$.

\end{proof}

Un argument de déformation permet ensuite d'effectuer la réduction suivante :

\begin{prop}\label{reduc2}

Le théorème \ref{codimigr} se déduit du cas particulier où $X$ est une hypersurface qui est un cône sur une courbe plane.

\end{prop}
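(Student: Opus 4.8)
Le plan est de d\'eg\'en\'erer $X$ vers un c\^one sur une courbe plane \`a l'aide d'une action de $\mathbb{G}_m$, puis d'utiliser la semi-continuit\'e sup\'erieure de la dimension des fibres. D'apr\`es la proposition \ref{reduc1}, on peut supposer que $X$ est une hypersurface int\`egre de degr\'e $d$ dans $\mathbb{P}^{n+1}$, d'\'equation $F$. Quitte \`a appliquer un \'el\'ement g\'en\'eral de $\mathrm{PGL}_{n+2}$ aux coordonn\'ees $x_0,\dots,x_{n+1}$, le th\'eor\`eme de Bertini (appliqu\'e en coupant $X$, de dimension $n\geq 2$, par $n-1$ hyperplans g\'en\'eraux) assure que $C:=X\cap\{x_3=\dots=x_{n+1}=0\}$ est une courbe plane int\`egre de degr\'e $d$ dans le $\mathbb{P}^2$ de coordonn\'ees $x_0,x_1,x_2$.

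Je consid\`ere alors l'action de $\mathbb{G}_m$ sur $\mathbb{P}^{n+1}$ de poids $0$ sur $x_0,x_1,x_2$ et $1$ sur $x_3,\dots,x_{n+1}$, et la famille $\mathfrak{X}=V(\tilde F)\subset\mathbb{P}^{n+1}\times\mathbb{A}^1$, o\`u $\tilde F$ s'obtient en multipliant chaque mon\^ome de $F$ par $\lambda$ \`a la puissance son degr\'e en $x_3,\dots,x_{n+1}$. Pour $\lambda\neq 0$, la fibre $\mathfrak{X}_\lambda$ est l'image de $X$ par l'automorphisme de $\mathbb{P}^{n+1}$ associ\'e \`a $\lambda$, donc lui est projectivement \'equivalente ; la fibre $\mathfrak{X}_0$ est l'hypersurface $V(G)$, avec $G(x_0,x_1,x_2)=F(x_0,x_1,x_2,0,\dots,0)$, c'est-\`a-dire le c\^one de sommet $\{x_0=x_1=x_2=0\}$ sur la courbe plane int\`egre $C$, qui est donc une hypersurface int\`egre de dimension $n$. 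Comme $\tilde F$ n'est pas divisible par $\lambda$ (car $G\neq 0$) et qu'aucune fibre n'est vide, $\pi:\mathfrak{X}\to\mathbb{A}^1$ est plat \`a fibres g\'eom\'etriquement int\`egres de dimension $n$.

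L'\'etape d\'elicate est de montrer que le mauvais lieu est ferm\'e en famille. Je forme le sous-sch\'ema $\mathfrak{W}=\{(x,H,\lambda): x\in\mathfrak{X}_\lambda\cap H\}\subset\mathbb{P}^{n+1}\times\mathcal{H}_e\times\mathbb{A}^1$ et sa projection $q$ sur $\mathcal{H}_e\times\mathbb{A}^1$, dont la fibre au-dessus de $(H,\lambda)$ est $\mathfrak{X}_\lambda\cap H$. Sur l'ouvert o\`u $\mathfrak{X}_\lambda\nsubseteq H$, le polyn\^ome de Hilbert des fibres de $q$ est constant et celles-ci sont purement de dimension $n-1$, de sorte que $q$ y est propre, plat, de pr\'esentation finie et \`a fibres \'equidimensionnelles. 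La proposition \ref{ouv} montre alors que le lieu $\mathcal{V}$ des $(H,\lambda)$ tels que $\mathfrak{X}_\lambda\nsubseteq H$ et $\mathfrak{X}_\lambda\cap H$ est g\'eom\'etriquement irr\'eductible et g\'en\'eriquement r\'eduit est ouvert dans $\mathcal{H}_e\times\mathbb{A}^1$. Son compl\'ementaire $\mathcal{F}$ est ferm\'e, donc propre sur $\mathbb{A}^1$, et sa fibre au-dessus de $\lambda$ est exactement $\mathcal{F}^{\rm{igr}}_e(\mathfrak{X}_\lambda)$.

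Il reste \`a appliquer la semi-continuit\'e sup\'erieure de la dimension des fibres du morphisme propre $\mathcal{F}\to\mathbb{A}^1$. Comme $\mathfrak{X}_\lambda$ est projectivement \'equivalente \`a $X$ pour $\lambda\neq 0$, on a $\dim\mathcal{F}_\lambda=\dim\mathcal{F}^{\rm{igr}}_e(X)$ pour ces $\lambda$ ; la semi-continuit\'e donne $\dim\mathcal{F}_0\geq\dim\mathcal{F}_\lambda$, d'o\`u $\dim\mathcal{F}^{\rm{igr}}_e(\mathfrak{X}_0)\geq\dim\mathcal{F}^{\rm{igr}}_e(X)$ et donc $$\codim_{\mathcal{H}_e}(\mathcal{F}^{\rm{igr}}_e(X))\geq\codim_{\mathcal{H}_e}(\mathcal{F}^{\rm{igr}}_e(\mathfrak{X}_0)).$$ Puisque $\mathfrak{X}_0$ est un c\^one sur une courbe plane, de dimension $n$, le cas particulier suppos\'e fournit pour $\mathfrak{X}_0$ la minoration du th\'eor\`eme \ref{codimigr}, qui vaut alors aussi pour $X$. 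La principale difficult\'e sera la v\'erification soigneuse des hypoth\`eses de la proposition \ref{ouv} pour $q$ (platitude et \'equidimensionnalit\'e sur le bon ouvert) ainsi que l'identification de la fibre plate limite $\mathfrak{X}_0$ avec le c\^one $V(G)$.
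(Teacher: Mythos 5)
Your proof is correct and takes essentially the same approach as the paper: your $\mathbb{G}_m$-weighted family is exactly the paper's degeneration $G(x_0,x_1,x_2,tx_3,\ldots,tx_N)=0$ of $X$ to a cone over an integral plane curve, the closedness of the bad locus via constant Hilbert polynomial (flatness) and Proposition \ref{ouv} is exactly the argument the paper delegates to the proof of Theorem \ref{bertinigr}, and the conclusion by upper semi-continuity of fiber dimension for the proper morphism $\mathcal{F}\rightarrow\mathbb{A}^1$ is the paper's final step. The only difference is cosmetic: you spell out the verification of flatness and equidimensionality that the paper cites by reference.
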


\begin{proof}[$\mathbf{Preuve}$]

Par \ref{reduc1}, on peut supposer que $X$ est une hypersurface. Par Bertini, on choisit des coordonnées dans lesquelles elle est d'équation $G(x_0,\ldots,x_N)=0$ de sorte que la courbe $G(x_0,x_1,x_2,0,\ldots,0)=0$ soit intègre.
Soit $\mathfrak{X}$ la sous-variété de $\mathbb{P}^{N}\times\mathbb{A}^{1}$ d'équation $G(x_0,x_1,x_2,tx_3,\ldots,tx_N)=0$. C'est une famille plate d'hypersurfaces de $\mathbb{P}^{N}$, triviale de fibre $X$ au-dessus de $\mathbb{A}^{1}\setminus\{0\}$, et telle que $\mathfrak{X}_0$ est un cône sur une courbe plane intègre.

Soit $\mathcal{F}$ le sous-ensemble de $\mathbb{A}^{1}\times\mathcal{H}_e$ constitué des $(t,H)$ tels que $\mathfrak{X}_t\cap H$ n'est pas irréductible et génériquement réduit de codimension $1$ dans $\mathfrak{X}_t$. Comme dans la preuve en \ref{preuvebertinigr} du théorème \ref{bertinigr}, on montre que $\mathcal{F}$ est fermé dans $\mathbb{A}^{1}\times\mathcal{H}_e$.

Les fibres de $p_1 :
\mathcal{F}\rightarrow \mathbb{A}^{1}$ sont les $p_1^{-1}(t)=\mathcal{F}^{\rm{igr}}_e(\mathfrak{X}_t)$. Par fermeture de $\mathcal{F}$, $p_1$ est propre, donc la dimension de ses fibres est semi-continue supérieurement. Ainsi, il existe $t\not=0$ tel que $\dim(\mathcal{F}^{\rm{igr}}_e(\mathfrak{X}_t))\leq\dim(\mathcal{F}^{\rm{igr}}_e(\mathfrak{X}_0))$. Comme $\mathfrak{X}_t$ est projectivement équivalente à $X$ et que l'on sait majorer la dimension de $\mathcal{F}^{\rm{igr}}_e(\mathfrak{X}_0)$ par hypothèse, on peut conclure.

\end{proof}

\subsection{Fin de la démonstration}

Achevons de démontrer le théorème \ref{codimigr}. On va adopter la même stratégie qu'en \ref{reduc2}, en utilisant maintenant une déformation de $X$ en une réunion d'hyperplans.

\begin{proof}[$\mathbf{Preuve \text{ }du \text{ }th\acute{e}or\grave{e}me\text{ } \ref{codimigr}}$]

\begin{etape1}
Construction d'une déformation.
\end{etape1}

\vspace{1em}

Par la proposition \ref{reduc2}, on suppose que $X$ est une hypersurface d'équation $G(x_0,x_1,x_2)=0$, qui est un cône de sommet l'espace linéaire $S$ d'équations $x_0=x_1=x_2=0$ sur une courbe plane intègre $C$ de degré $d$.
Par Bertini, on choisit nos coordonnées de sorte que $G(x_0,x_1,0)=0$ soit constitué de $d$ points réduits.
Soit $\mathfrak{X}$ la sous-variété de $\mathbb{P}^{N}\times\mathbb{A}^{1}$ d'équation $G(x_0,x_1,tx_2)=0$. C'est une famille plate d'hypersurfaces de $\mathbb{P}^{N}$, triviale de fibre $X$ au-dessus de $\mathbb{A}^{1}\setminus\{0\}$, et telle que $\mathfrak{X}_0$ est réunion réduite de $d$ hyperplans distincts $L_1,\ldots, L_d$ s'intersectant le long d'un espace linéaire commun $L : x_0=x_1=0$.

On note $\mathfrak{Z}\subset\mathbb{P}^N\times\mathbb{A}^{1}\times\mathcal{H}_e$ la famille paramétrant les $\mathfrak{Z}_{t,H}=\mathfrak{X}_t\cap H$ et $q :
\mathfrak{Z}\rightarrow\mathbb{A}^{1}\times\mathcal{H}_e$ la projection canonique. Soit $\mathcal{F}$ le sous-ensemble de $\mathbb{A}^{1}\times\mathcal{H}_e$ constitué des $(t,H)$ tels que $\mathfrak{Z}_{t,H}$ n'est pas irréductible et génériquement réduit de codimension $1$ dans $\mathfrak{X}_t$. 
On note $\mathcal{U}$ l'ouvert
de $\mathbb{A}^{1}\times\mathcal{H}_e$ où $\mathfrak{X}_t\cap
 H$ est de codimension $1$ dans $\mathfrak{X}_t$. Comme le polynôme de Hilbert de ses fibres est constant, en appliquant le critère III 9.9 de \cite{Hartshorne}, on voit que
 $q_\mathcal{U}:q^{-1}(\mathcal{U})\rightarrow \mathcal{U}$ est plat.
Soit alors $\mathcal{U}'$ le sous-ensemble de $\mathcal{U}$ constitué des $(t,H)$ tels
que $\mathfrak{X}_t\cap H$ est irréductible et génériquement réduit. Il est ouvert par \ref{ouv}. Par conséquent, $\mathcal{F}$, qui est le complémentaire de $\mathcal{U}'$ dans $\mathcal{H}_e$, est fermé.
 Soit $\mathcal{F}'$ la réunion des composantes irréductibles de $\mathcal{F}$ dominant $\mathbb{A}^{1}$. Comme $\mathfrak{X}$ est triviale au-dessus de $\mathbb{A}^{1}\setminus\{0\}$, c'est aussi le cas de $\mathcal{F}$ ; ainsi, $\mathcal{F}$ et $\mathcal{F}'$ coïncident au-dessus de $\mathbb{A}^{1}\setminus\{0\}$.

On va montrer que $\mathcal{F}'_0\subset \mathcal{G}_1\cup \mathcal{G}_2\cup\bigcup_i\mathcal{F}^{\rm{igr}}_e(L_i)$, où $\mathcal{G}_1$ est l'ensemble des hypersurfaces contenant $L$ et $\mathcal{G}_2$ l'ensemble des hypersurfaces dont l'intersection avec $L$ est $eS$. Admettons dans un premier temps cette inclusion. Quand $e\geq2$, $\codim_{\mathcal{H}_e}(\mathcal{F}^{\rm{igr}}_e(L_i))={e+n-1 \choose e}-n$ par \ref{caslineaire}, et $\codim_{\mathcal{H}_e}(\mathcal{G}_1\cup \mathcal{G}_2)={e+n-1 \choose e}-1$. Ainsi, $\codim_{\mathcal{H}_e}(\mathcal{F}'_0)\geq{e+n-1 \choose e}-n$. Par propreté de $p_1:\mathcal{F}'\rightarrow\mathbb{A}^{1}$, la dimension de ses fibres est semi-continue supérieurement, et il existe $t\not=0$ tel que $\codim_{\mathcal{H}_e}(\mathcal{F}'_t)\geq{e+n-1 \choose e}-n$. Mais, $\mathcal{F}'_t=\mathcal{F}_t=\mathcal{F}^{\rm{igr}}_e(\mathfrak{X}_t)$, ce qui permet de conclure car $\mathfrak{X}_t$ est projectivement équivalente à $X$.

Quand $e=1$, $\codim_{\mathcal{H}_1}(\mathcal{F}^{\rm{igr}}_1(L_i))=n+1$ et $\codim_{\mathcal{H}_1}(\mathcal{G}_1\cup \mathcal{G}_2)=n-1$. On conclut en raisonnant identiquement.

\begin{etape2}

Changement de base.

\end{etape2}

Il reste à prouver l'inclusion admise ci-dessus. On raisonne par l'absurde en choisissant $H\in \mathcal{F}'_0$ telle que $H\notin \mathcal{G}_1\cup \mathcal{G}_2\cup\bigcup_i\mathcal{F}^{\rm{igr}}_e(L_i)$. Ceci implique que $\mathfrak{X}_0\cap H$ est réduit et a exactement $d$ composantes irréductibles distinctes $H_i=L_i\cap H$ (il n'y a pas de points immergés car $\mathfrak{X}_0\cap H$ est intersection complète).

 D'une part, $H$ appartient à une composante irréductible de $\mathcal{F}$ qui domine $\mathbb{A}^{1}$, et d'autre part, comme $H\notin \mathcal{G}_1$, $(0,H)\in \mathcal{U}$. On peut donc trouver une courbe intègre $B$, un morphisme $f:B\rightarrow \mathcal{F}\cap \mathcal{U}$ tel que $p_1\circ f:B\rightarrow\mathbb{A}^{1}$ soit dominant, et un point $b\in B$ tel que $f(b)=(0,H)$. On note $\mathfrak{Z}_B\subset\mathfrak{X}_B$ le tiré en arrière de $\mathfrak{Z}\subset\mathfrak{X}\times\mathcal{H}_e$ par $f:B\rightarrow\mathbb{A}^{1}\times\mathcal{H}_e$. Comme $f$ est à valeurs dans $\mathcal{U}$, par changement de base, $q_B:\mathfrak{Z}_B\rightarrow B$ est propre et plat. 
Comme $\mathfrak{Z}_{b}=\mathfrak{X}_0\cap H$ est réduit, en appliquant \cite{EGA43} 12.2.4 (v), et quitte à restreindre $B$, on peut supposer les fibres de $q_B$ géométriquement réduites. Or, $q$ étant à valeurs dans $\mathcal{F}$, les fibres de $q_B$ ne sont pas géométriquement intègres. Elles sont donc nécessairement géométriquement réductibles. Quitte à remplacer $B$ par un revêtement, on peut alors supposer que la fibre générique de $q_B$ est réductible, c'est-à-dire que $\mathfrak{Z}_B$ est réductible. Finalement, en normalisant, on voit qu'on peut choisir $B$ lisse.

Dans la suite de la démonstration, on va obtenir une contradiction en montrant l'irréductibilité de $\mathfrak{Z}_B$.

\begin{etape3}

Irréductibilité de $\mathfrak{Z}_{B}$.

\end{etape3}

  Comme $H\notin \mathcal{G}_2$, $H$ contient un point fermé $P\in L\setminus S$. Ainsi, $P\in\mathfrak{Z}_{b}=\mathcal{X}_0\cap H$. Soit $Z$ une composante irréductible de $\mathfrak{Z}_B$ contenant $P$. Par dimension, $q_B:Z\rightarrow B$ est dominante, donc plate. 
Admettons un instant que $Z$ contienne les $d$ composantes irréductibles $H_1,\ldots, H_d$ de $\mathfrak{Z}_{b}$. Alors, si $b' \in B$,

$$\begin{array}{llll} \deg(Z_{b'})&= & \deg(Z_{b})    \textrm{\hspace{2mm}par platitude}\\

&=& \deg(\mathfrak{Z}_{b})\textrm{\hspace{2mm}car\hspace{2mm}} \mathfrak{Z}_{b}=Z_{b} \\

&=&\deg(\mathfrak{Z}_{b'})  \textrm{\hspace{2mm}par platitude.}\\

\end{array}$$

 Comme $\mathfrak{Z}_{b'}$ et $Z_{b'}$ sont de même dimension, et qu'on a une inclusion, cela implique $Z_{b'}=\mathfrak{Z}_{b'}$, soit $Z=\mathfrak{Z}$. Ce qui contredit la réductibilité de $\mathfrak{Z}$.

Il reste à prouver l'assertion admise ci-dessus : on doit montrer que $H_k\subset Z_{b}$ pour $1\leq k\leq d$. C'est ce qu'on va obtenir dans la suite, comme conséquence du théorème de Ramanujam-Samuel. Dans l'étape suivante, on introduit les variétés auxquelles on pourra appliquer ce théorème.

\begin{etape4}

Normalisation.

\end{etape4}

  Considérons la sous-variété $\Gamma$ de $\mathbb{P}^{2}\times\mathbb{A}^{1}$ d'équation $G(x_0,x_1,tx_2)=0$.
La projection depuis $S$ induit $\pi:\mathfrak{X}\setminus S\rightarrow \Gamma$, lisse car de fibres des espaces affines $\mathbb{A}^{N-2}$.
  La projection $pr_2:\Gamma\rightarrow\mathbb{A}^1$ est triviale de fibre $C$ au-dessus de $\mathbb{A}^{1}\setminus\{0\}$. Au-dessus de $0$, elle est lisse sauf en le point $\Omega=[0:0:1]$ : en effet, $pr_2^{-1}(0)$ est constitué de $d$ droites $D_1,\ldots, D_d$ s'intersectant en $\Omega$. Remarquons que $\pi^{-1}(D_k)=L_k\setminus S$.

 En tirant en arrière $\mathfrak{X}\setminus S\stackrel{\pi}{\rightarrow} \Gamma\stackrel{pr_2}{\rightarrow}\mathbb{A}^1$ par $p_1\circ f:B\rightarrow\mathbb{A}^{1}$, on obtient $\mathfrak{X}_B\setminus S_B\stackrel{\pi_B}{\rightarrow} \Gamma_B\stackrel{pr_{2,B}}{\rightarrow} B$. On identifie $\Omega$ et $D_k$ aux sous-variétés correspondantes de $(\Gamma_B)_b=\Gamma_0$, et $P$ et $L_k\setminus S$ aux sous-variétés correspondantes de $(\mathfrak{X}_B\setminus S_B)_b=(\mathfrak{X}\setminus S)_0$.

  Notons $\nu:\widetilde{\Gamma_B}\rightarrow \Gamma_B$ la normalisation de $\Gamma_B$. Comme $D_k\setminus\Omega$ est dans le lieu lisse de $\Gamma_B$, au-dessus duquel $\nu$ est un isomorphisme, on peut considérer la transformée stricte de $D_k$ dans $\widetilde{\Gamma_B}$ : on la note encore $D_k$ et on notera $i_k:D_k\rightarrow\widetilde{\Gamma_B}$ l'inclusion. Montrons que $\Omega$ a un unique antécédent $\tilde{\Omega}$ dans $\widetilde{\Gamma_B}$. D'une part, les composantes irréductibles de $(\widetilde{\Gamma_B})_b$ sont exactement les $i_k(D_k)_{1\leq k\leq d}$, de sorte que $\nu^{-1}(\Omega)$ est constitué des $i_k(\Omega)_{1\leq k\leq d}$. D'autre part, comme la fibre générique de $pr_{2,B}\circ\nu$ est la normalisation de $C$ qui est connexe, \cite{EGA43} 15.5.9 (ii) montre que $(\widetilde{\Gamma_B})_b$ est connexe. Ceci n'est possible que si $i_k(\Omega)$ ne dépend pas de $k$ : on note ce point $\tilde{\Omega}$.

On tire en arrière $\nu$ par $\pi_B$ pour obtenir $\mathfrak{W}$ muni de deux projections $\tilde{\pi}$ et $\nu'$ respectivement sur $\widetilde{\Gamma_B}$ et $\mathfrak{X}_B\setminus S_B$. Tirant en arrière $i_k$ par $\tilde{\pi}$, on obtient $i'_k:L_k\setminus S\rightarrow\mathfrak{W}$, qui est la transformée stricte de $L_k\setminus S$. Comme $\Omega$ a un unique antécédent par $\nu$, $P$ a un unique antécédent par $\nu'$, égal à $i_k'(P)$ pour tout $k$, qu'on note $\tilde{P}$.

 Le diagramme cartésien de variétés pointées ci-dessous récapitule les constructions effectuées :

$$\xymatrix{
(L_k\setminus S, P)\ar[r]^{i'_k}\ar[d]_{\pi}&(\mathfrak{W}, \tilde{P})\ar[r]^{\nu'}\ar[d]_{\tilde{\pi}}& (\mathfrak{X}_B\setminus S_B, P) \ar[r]^{p'}\ar[d]_{\pi_B}&(\mathfrak{X}\setminus S, P)\ar[d]_{\pi}          \\
(D_k,\Omega)\ar[r]^{i_k}&(\widetilde{\Gamma_B},\tilde{\Omega})\ar[r]^{\nu}& (\Gamma_B,\Omega) \ar[r]^{p}\ar[d]_{pr_{2,B}} &(\Gamma,\Omega)\ar[d]_{pr_2}                  \\  
&& (B,b)\ar[r]^{p_1\circ f}&(\mathbb{A}^1,0) }$$

\begin{etape5}

Application du théorème de Ramanujam-Samuel.

\end{etape5}

  Comme $H\notin \mathcal{G}_1$, $Z_b$ ne contient pas $L$. Ainsi, $Z$ n'est pas inclus dans le lieu où $\nu'$ n'est pas un isomorphisme, et on peut considérer sa transformée stricte $W$ dans $\mathfrak{W}$. Par propreté de $\nu'$, $\nu'(W)=Z$, et on a donc $\tilde{P}\in W$.

Remarquons que comme $Z_b$ ne contient pas $L$, $W$ ne contient pas $\tilde{\pi}^{-1}(\tilde{\Omega})$.       On peut donc appliquer le théorème de Ramanujam-Samuel sous sa forme \cite{EGA44} 21.14.3 (i) au morphisme lisse de base normale $\tilde{\pi} :\mathfrak{W}\rightarrow\widetilde{\Gamma_B}$ et au diviseur $W$ de $\mathfrak{W}$ en le point $\tilde{P}$. Ainsi, $W$ est de Cartier dans $\mathfrak{W}$ en $\tilde{P}$. En particulier, comme $W$ rencontre $L_k\setminus S$ en le point $\tilde{P}$, $W$ rencontre $L_k\setminus S$ en un diviseur de $L_k\setminus S$. Cela signifie que $Z$ rencontrait $L_k$ en un diviseur de $L_k$, nécessairement égal à $H_k$. C'est ce qu'on voulait montrer.

 \end{proof}

\section{Minoration de la codimension de $\mathcal{F}^{\rm{int}}_e(X)$}

  L'objet de cette partie est de déduire le théorème \ref{codimint} du théorème \ref{codimigr}. Comme ces énoncés sont insensibles à l'extension des scalaires, on fera l'hypothèse que $K$ est algébriquement clos. 

Il faut contrôler les points immergés qui apparaissent lorsqu'on intersecte $X$ avec une hypersurface.

\begin{lemme}\label{profun}

Soit $X$ un $K$-schéma de type fini réduit. Alors $X$
ne contient qu'un nombre fini de points de profondeur $1$ et de
codimension $\geq 2$ dans $X$.

\end{lemme}

\begin{proof}[$\mathbf{Preuve}$]

La fonction
$\coprof(x)=\dim(\mathcal{O}_{X,x})-\prof(\mathcal{O}_{X,x})$ est
semi-continue supérieurement sur $X$ par \cite{EGA43} 12.1.1 (v).

Notons $F_n$ le fermé $\{x\in X\mid \coprof(x)\geq n\}$. Par
positivité de la profondeur, les points de $F_n$ sont de codimension
$\geq n$. Si $n\geq1$, $F_n$ ne contient pas de points de codimension $
n$. En effet, un tel point vérifierait $\prof(x)=0$, et \cite{EGA41}
chap. 0, 16.4.6 (i) montre que ce serait un point immergé de $X$.

 Ainsi, pour $n\geq1$, les points de $F_n$ de hauteur $n+1$ sont des points
 génériques de composantes irréductibles de $F_n$, et ils sont donc en
 nombre fini. Les points de $X$ de profondeur $1$ et de codimension
 $\geq 2$ sont exactement la réunion de tous ces points pour $n\geq 1$, et sont donc en nombre
 fini.

\end{proof}

\begin{lemme}\label{hehe}

Soit $X\subset\mathbb{P}^N$ une sous-variété de
dimension $\geq2$. Notons $x_1,\ldots x_r,$ ses points de
codimension $\geq2$ et de profondeur $1$, qui sont en nombre fini
par le lemme \ref{profun}.

Alors si $H\in\mathcal{H}_e$ ne contient pas $X$, les points
immergés de $X\cap H$ sont exactement les $x_i$ appartenant à $H$.

\end{lemme}

 \begin{proof}[$\mathbf{Preuve}$]

En effet, par \cite{EGA41} chap. 0, 16.4.6 (i), les points immergés
sont exactement ceux de profondeur $0$ et de codimension $\geq1$. Il
suffit alors de remarquer que lors d'une section non triviale par une hypersurface,
codimension et profondeur chutent tous deux de $1$.
\end{proof}

 Montrons à présent le théorème \ref{codimint} :

 \begin{proof}[$\mathbf{Preuve \text{ }du \text{ }th\acute{e}or\grave{e}me\text{ } \ref{codimint}}$]

On considère les points de $X$ de profondeur $1$ et de codimension
$\geq2$. Notons $y_1,\ldots,y_s$ ceux qui sont fermés, et
$z_1,\ldots,z_t$ les autres. Soit $\mathcal{F}_i$ l'ensemble des hypersurfaces
contenant $y_i$ et $\mathcal{G}_i$ l'ensemble des hypersurfaces contenant
$z_i$.

 Par le lemme \ref{hehe},
 $\mathcal{F}^{\rm{int}}_e(X)=\mathcal{F}^{\rm{igr}}_e(X)\cup\bigcup_i\mathcal{G}_i\cup\bigcup_i\mathcal{F}_i$.
 Or, dans $\mathcal{H}_e$, $\mathcal{F}_i$ est un sous-espace linéaire de codimension $1$, $\mathcal{G}_i$ est de codimension $\geq e+1$ par la proposition \ref{codimhyper}, et on minore la codimension de
 $\mathcal{F}^{\rm{igr}}_e(X)$ à l'aide du théorème
 \ref{codimigr}. On en déduit les minorations voulues.

Enfin, ces minorations sont optimales comme conséquence de la preuve et du fait que les minorations du théorème \ref{codimigr} sont optimales. Plus précisément, voici des variétés réalisant les cas d'égalité :

\begin{enumerate}[(i)]
\item Cas où $X$ n'a pas de point de profondeur $1$ et de codimension $>1$ :

\begin{enumerate}
\item quand $e=1$, un cône sur une courbe plane intègre.  

\item quand $e\geq2$, un espace linéaire.

\end{enumerate}

\item Cas où $X$ n'a pas de point fermé de profondeur $1$ :

\begin{enumerate}
\item quand  $n\geq3$, une variété contenant un point de profondeur $1$ dont l'adhérence est une droite, mais pas de point fermé de profondeur $1$    .

\item quand  $n=2$ et $e\geq2$, un plan.

\item quand  $n=2$ et $e=1$, un cône sur une courbe plane intègre.     

\end{enumerate}

\item Cas où $X$ possède un point fermé de profondeur $1$ :

\begin{enumerate}

\item une variété quelconque contenant un point fermé de profondeur $1$.

\end{enumerate}
\end{enumerate}

\end{proof}

\section{Le théorème de Bertini en famille}

 Comme application des résultats précédents, montrons le théorème \ref{famillebert}.

 \begin{proof}[$\mathbf{Preuve \text{ }du \text{ }th\acute{e}or\grave{e}me\text{ } \ref{famillebert}}$]

Montrons $(ii)$ : on procède de même pour $(i)$ en utilisant le théorème \ref{codimigr} à la place du théorème \ref{codimint} $(ii)$.

Soit $\mathcal{F}$ le sous-ensemble de $V\times\mathcal{H}_e$ constitué des $(v,H)$ tels que $\mathfrak{X}_v\cap H$ n'est pas géométriquement intègre de codimension $1$ dans $\mathfrak{X}_v$. Comme dans la preuve en \ref{preuvebertinigr} du théorème \ref{bertinigr}, et en utilisant \cite{EGA43} 12.2.4 (viii) à la place de la proposition \ref{ouv}, on montre que $\mathcal{F}$ est fermé dans $V\times\mathcal{H}_e$.

Les fibres de $p_1 :
\mathcal{F}\rightarrow V$ sont les $p_1^{-1}(v)=\mathcal{F}^{\rm{int}}_e(\mathfrak{X}_v)$ et sont donc de codimension $\geq e-1$ dans $\mathcal{H}_e$
par le théorème \ref{codimint} $(ii)$. La codimension de $\mathcal{F}$ dans
$V\times\mathcal{H}_e$ est donc $\geq e-1=\dim(V)+1$. Ainsi, $p_2 :
\mathcal{F}\rightarrow\mathcal{H}_e$ n'est pas surjective. 

L'ensemble qui nous intéresse est le complémentaire de l'image de $p_2$, et est donc non vide. Comme il est constructible par le théorème de Chevalley, il contient un ouvert non vide. Quand $V$ est propre, $\mathcal{F}$ est également propre, et son image par $p_2$ est fermée, donc de complémentaire un ouvert. Enfin, si le corps $K$ est infini, toute variété non vide a un $K$-point, d'où l'existence de l'hypersurface définie sur $K$ recherchée.

\end{proof}




\addcontentsline{toc}{section}{R\'{e}f\'{e}rences}

\end{document}